\theoremstyle{plain}
\newtheorem{theorem}{Th\'eor\`eme}
\newtheorem{theoreme}[theorem]{Th\'eor\`eme}
\theoremstyle{definition}
\theoremstyle{remark}
\theoremstyle{plain}
\newtheorem{thmsec}{Th\'eor\`eme}[section]
\newtheorem{thm}[thmsec]{Th\'eor\`eme}
\newtheorem{pro}[thmsec]{Proposition}
\newtheorem{lem}[thmsec]{Lemme}
\newtheorem{cor}[thmsec]{Corollaire}
\newtheorem{conj}{Conjecture}
\theoremstyle{definition}
\theoremstyle{remark}
\newtheorem{rem}[thmsec]{Remarque}
\def\og{\leavevmode\raise.3ex\hbox{$\scriptscriptstyle\langle\!\langle$~}}
\def\fg{\leavevmode\raise.3ex\hbox{~$\!\scriptscriptstyle\,\rangle\!\rangle$}}
\numberwithin{equation}{section}       
\newcommand{\vide}{\emptyset}
\newcommand{\N}{\mathbb{N}}
\newcommand{\Q}{\mathbb{Q}}
\newcommand{\R}{\mathbb{R}}
\newcommand{\C}{\mathbb{C}}
\newcommand{\sph}{\mathbb{P}^{1}_{\mathbb{C}}}
\newcommand{\pp}{\mathbb{P}^{2}_{\mathbb{C}}}
\newcommand{\pd}{\mathbb{\check{P}}^{2}_{\mathbb{C}}}
\newcommand\Sing{\mathrm{Sing}}
\newcommand\Tang{\mathrm{Tang}}
\newcommand\Leg{\mathrm{Leg}}
\newcommand\IF{\mathrm{I}_{\mathcal{F}}}
\newcommand\IinvF{\mathrm{I}_{\mathcal{F}}^{\mathrm{inv}}}
\newcommand\ItrF{\mathrm{I}_{\mathcal{F}}^{\hspace{0.2mm}\mathrm{tr}}}
\newcommand\F{\mathcal{F}}
\newcommand\Ccal{\mathcal{C}}
\newcommand\pref{\mathscr{F}}
\newcommand\preh{\mathscr{H}}
\newcommand\W{\mathcal{W}}
\newcommand\checkC{\hspace{1mm}\check{\hspace{-1mm}\mathcal{C}}}
\newcommand\Omegahesse{\Omega_{\scalebox{0.64}{\ensuremath H}}^{4}}
\newcommand\Omegahilbertcinq{\Omega_{\scalebox{0.64}{\ensuremath H}}^{5}}
\newcommand\Omegahessesept{\Omega_{\scalebox{0.64}{\ensuremath H}}^{7}}
\newcommand\omegaoverline{{\mspace{2mu}\overline{\mspace{-1.4mu}\omega\mspace{-1.4mu}}\mspace{2mu}}}
\newcommand\Omegaoverline{{\mspace{2mu}\overline{\mspace{-1.4mu}\Omega\mspace{-1.4mu}}\mspace{2mu}}}
\newcommand\Hesse{\mathcal{F}_{\hspace{-0.4mm}\raisebox{-0.2mm}{\tiny{$H$}}}^{4}}
\newcommand\Hilbertcinq{\mathcal{F}_{\hspace{-0.4mm}\raisebox{-0.2mm}{\tiny{$H$}}}^{5}}
\newcommand\Hessesept{\mathcal{F}_{\hspace{-0.4mm}\raisebox{-0.2mm}{\tiny{$H$}}}^{7}}
\begin{document}
\title[Le tissu dual d'un pré-feuilletage convexe réduit sur $\mathbb{P}^{2}_{\mathbb{C}}$ est plat]{Le tissu dual d'un pré-feuilletage convexe réduit sur $\mathbb{P}^{2}_{\mathbb{C}}$ est plat}
\date{\today}

\author{Samir \textsc{Bedrouni}}

\address{Facult\'e de Math\'ematiques, USTHB, BP $32$, El-Alia, $16111$ Bab-Ezzouar, Alger, Alg\'erie}
\email{sbedrouni@usthb.dz}

\keywords{pré-feuilletage, pré-feuilletage convexe, tissu, tissu dual, platitude}

\maketitle{}

\begin{altabstract}\selectlanguage{english}
A holomorphic pre-foliation $\mathscr{F}=\mathcal{C}\boxtimes\mathcal{F}$ on $\mathbb{P}^{2}_{\mathbb{C}}$ is the data of a reduced complex projective curve $\mathcal{C}$ of $\mathbb{P}^{2}_{\mathbb{C}}$ and a holomorphic foliation $\mathcal{F}$ on $\mathbb{P}^{2}_{\mathbb{C}}$. When the foliation $\mathcal{F}$ is convex (resp. reduced convex) and the curve $\mathcal{C}$ is invariant by $\mathcal{F}$, we say that the pre-foliation $\mathscr{F}=\mathcal{C}\boxtimes\mathcal{F}$ is convex (resp. reduced convex). We prove that the dual web of a reduced convex pre-foliation on $\mathbb{P}^{2}_{\mathbb{C}}$ is flat. This generalizes our previous result obtained in the case where the associated curve consists only of invariant lines.

\noindent{\it 2010 Mathematics Subject Classification. --- 14C21, 32S65, 53A60.}
\end{altabstract}

\selectlanguage{french}
\begin{abstract}
Un pré-feuilletage holomorphe $\mathscr{F}=\mathcal{C}\boxtimes\mathcal{F}$ sur $\mathbb{P}^{2}_{\mathbb{C}}$ est la donnée d'une courbe projective complexe réduite $\mathcal{C}$ de $\mathbb{P}^{2}_{\mathbb{C}}$ et d'un feuilletage holomorphe $\mathcal{F}$ sur $\mathbb{P}^{2}_{\mathbb{C}}.$ Lorsque le feuilletage $\mathcal{F}$ est convexe (resp. convexe réduit) et que la courbe $\mathcal{C}$ est invariante par $\mathcal{F}$, on dit que le pré-feuilletage $\mathscr{F}=\mathcal{C}\boxtimes\mathcal{F}$ est convexe (resp. convexe réduit). Nous démontrons que le tissu dual d'un pré-feuilletage convexe réduit sur $\mathbb{P}^{2}_{\mathbb{C}}$ est plat, ce qui généralise un résultat que nous avons obtenu précédemment dans le cas où la courbe associée est composée uniquement de~droites~invariantes.
\noindent{\it Classification math\'ematique par sujets (2010). --- 14C21, 32S65, 53A60.}
\end{abstract}

\section{Introduction}
\bigskip

\noindent Cet article est une continuation de l'étude de la platitude des tissus duaux des pré-feuilletages du plan projectif complexe, initiée dans~\cite{Bed25Dedic} et poursuivie dans \cite{Bed24arxiv}. Nous renvoyons à~\cite{BM18Bull} et \cite{Bed25Dedic} pour les définitions et notations utilisées.

\subsection{Pré-feuilletages convexes sur $\pp$}

Soient $0\leq k\leq d$ des entiers. Un {\sl pré-feuilletage holomorphe $\mathscr{F}$ sur $\pp$ de co-degré $k$ et de degré $d$} est la donnée d'une courbe projective complexe réduite $\mathcal{C}\subset\pp$ de degré~$k$ et d'un feuilletage holomorphe $\F$ sur $\pp$ de degré $d-k.$ On note $\mathscr{F}=\mathcal{C}\boxtimes\F.$ On dit que $\mathcal{F}$ (resp.~$\mathcal{C}$) est le {\sl feuilletage associé} (resp. la {\sl courbe associée}) à $\mathscr{F},$ \emph{voir}~\cite{Bed25Dedic}.

\noindent Les pré-feuilletages de co-degré $0$ et de degré $d$ sont précisément les feuilletages de degré $d$ sur $\pp.$
\smallskip

\noindent Suivant~\cite{MP13} un feuilletage holomorphe sur $\pp$ est dit \textsl{convexe} si ses feuilles, autres que les droites, n'ont pas de points d'inflexion. Notons que, d'après \cite{Per01}, tout feuilletage $\F$ de degré $d\geq1$ sur $\pp$ ne peut avoir plus de $3d$ droites invariantes (distinctes). Lorsque cette borne est atteinte, alors $\F$ est nécessairement convexe; dans ce cas $\F$ est dit \textsl{convexe réduit}.
\smallskip

\noindent Les notions de convexité et de convexité réduite des feuilletages s'étendent de manière naturelle aux pré-feuilletages comme suit: un pré-feuilletage $\pref=\mathcal{C}\boxtimes\F$ sur $\pp$ est dit \textsl{convexe} (resp. \textsl{convexe réduit}) si~le~feuilletage $\F$ est convexe (resp. convexe réduit) et si de plus la courbe $\mathcal{C}$ est invariante par $\F,$ \emph{cf.}~\cite{Bed25Dedic}.

\subsection{Tissus et transformation de \textsc{Legendre}}

\noindent Un $d$-tissu (régulier) $\W$ de $(\mathbb{C}^2,0)$ est la donnée d'une famille $\{\F_1,\F_2,\ldots,\F_d\}$ de feuilletages holomorphes réguliers de $(\mathbb{C}^2,0)$ deux à deux transverses en l'origine. On note $\mathcal{W}=\mathcal{F}_{1}\boxtimes\cdots\boxtimes\mathcal{F}_{d}.$

\noindent Un $d$-tissu (global) sur une surface complexe $M$ est donné dans une carte locale $(x,y)$ par une équation différentielle implicite $F(x,y,y')=0$, où $F(x,y,p)=\sum_{i=0}^{d}a_{i}(x,y)p^{d-i}$ est un polynôme (réduit) en $p$ de degré~$d$, à~coefficients $a_i$ analytiques, avec $a_0$ non identiquement nul. Au voisinage de tout point $z_{0}=(x_{0},y_{0})$ tel que $a_{0}(x_{0},y_{0})\Delta(x_{0},y_{0})\neq 0$, où $\Delta(x,y)$ est le $p$-discriminant de $F$, les courbes intégrales de cette équation définissent un $d$-tissu régulier de $(\mathbb{C}^2,z_{0}).$

\noindent En vertu de~\cite{MP13}, à tout pré-feuilletage $\mathscr{F}=\mathcal{C}\boxtimes\F$ de degré $d\geq1$ et de co-degré $k<d$ sur $\pp$ on~peut associer un $d$-tissu de degré $1$ sur le plan projectif dual $\pd,$ appelé {\sl transformée de \textsc{Legendre}} (ou~tissu dual) de $\pref$ et noté $\Leg\pref$\label{not:Leg-pref}; si $\pref$ est donné dans une carte affine $(x,y)$ de $\pp$ par une $1$-forme $\omega=f(x,y)\left(A(x,y)\mathrm{d}x+B(x,y)\mathrm{d}y\right)$, où $f,A,B\in\mathbb{C}[x,y],$ $\mathrm{pgcd}(A,B)=1,$ alors, dans la carte affine $(p,q)$ de~$\pd$ correspondant à la droite $\{y=px-q\}\subset\pp,$ $\Leg\pref$ est décrit par l'équation différentielle implicite
\[
\check{F}(p,q,x):=f(x,px-q)\left(A(x,px-q)+pB(x,px-q)\right)=0, \qquad \text{avec} \qquad x=\frac{\mathrm{d}q}{\mathrm{d}p}.
\]
Lorsque $k\geq1$, $\Leg\pref$ se décompose en $\Leg\pref=\Leg\mathcal{C}\boxtimes\Leg\F$ où $\Leg\mathcal{C}$ est le $k$-tissu algébrique de $\pd$ défini par l'équation $f(x,px-q)=0$ et $\Leg\F$ est le $(d-k)$-tissu irréductible de degré $1$ de $\pd$ donné par $A(x,px-q)+pB(x,px-q)=0.$

\subsection{Courbure et platitude}

\noindent L'un des premiers résultats en géométrie des tissus, dû à \textsc{Blaschke}-\textsc{Dubourdieu} \cite{BD28}, caractérise l'équivalence locale d'un germe de $3$-tissu régulier $\W=\F_1\boxtimes\F_2\boxtimes\F_3$ de $(\mathbb{C}^2,0)$ avec le $3$-tissu trivial défini par $\mathrm{d}x.\mathrm{d}y.\mathrm{d}(x+y)$ par l'annulation d'une $2$-forme différentielle $K(\mathcal{W})$, appelée courbure de \textsc{Blaschke} de $\mathcal{W}$, et définie comme suit. Pour $i=1,2,3$, soit $\omega_{i}$ une $1$-forme à singularité isolée en $0$ définissant le~feuilletage~$\mathcal{F}_{i}.$ Sans perdre de généralité, on peut supposer que les $1$-formes $\omega_i$ vérifient $\omega_1+\omega_2+\omega_3=0.$ On montre qu'il existe une $1$-forme méromorphe $\eta(\W)$, bien définie à l'addition près d'une $1$-forme fermée logarithmique $\dfrac{\mathrm{d}g}{g}$ avec $g\in\mathcal{O}^*(\mathbb{C}^{2},0)$, telle que $\mathrm{d}\omega_i=\eta(\W)\wedge\omega_i$ pour $i=1,2,3.$ Par définition, la courbure de \textsc{Blaschke} de $\W$ est la $2$-forme $K(\W)=\mathrm{d}\,\eta(\W).$

\noindent Pour un $d$-tissu $\W$ complètement décomposable, $\mathcal{W}=\mathcal{F}_{1}\boxtimes\cdots\boxtimes\mathcal{F}_{d}$ avec $d>3$, on définit la courbure $K(\W)$ de $\W$ comme étant la somme des courbures de \textsc{Blaschke} des sous-$3$-tissus de $\W$.

\noindent On peut vérifier que $K(\mathcal{W})$ est une $2$-forme méromorphe à pôles le long du discriminant $\Delta(\mathcal{W})$ de $\mathcal{W},$ intrinsèquement attachée à $\mathcal{W}.$

\noindent Enfin, si $\mathcal{W}$ est un $d$-tissu sur une surface complexe $M$ (non forcément complètement décomposable), alors on peut le transformer en un $d$-tissu complètement décomposable en prenant son pull-back par un revêtement galoisien ramifié convenable. L'invariance de la courbure de ce nouveau tissu par l'action du groupe de \textsc{Galois} permet de la redescendre en une $2$-forme $K(\W)$ méromorphe globale sur $M,$ à pôles le long du discriminant de $\mathcal{W},$ \emph{voir}~\cite{MP13}.

\noindent Un tissu de courbure nulle est dit plat. Lorsque $M=\pp$ la platitude d'un tissu $\W$ sur $\pp$ se caractérise par l'holomorphie de sa courbure $K(\W)$ le long des points génériques de $\Delta(\W)$.

\subsection{Résultat principal}

Dans~\cite[Théorème~4.2]{MP13} \textsc{Mar\'{\i}n} et \textsc{Pereira} ont montré que le tissu dual d'un feuilletage convexe réduit sur $\pp$ est plat. Dans~\cite[Théorème~E]{Bed25Dedic} nous avons prouvé un résultat analogue pour les pré-feuilletages $\pref=\mathcal{C}\boxtimes\F$ convexes réduits de co-degré $1$ ({\it i.e.} dont la courbe $\mathcal{C}$ est une droite invariante). Dans~\cite[Théorème~1]{Bed24arxiv} nous avons étendu ce résultat au cas où la courbe associée est composée de plusieurs droites invariantes. Dans cet article, nous nous proposons d'établir le théorème suivant, qui traite le cas général d'un pré-feuilletage convexe réduit quelconque.
\begin{theoreme}\label{theoreme:C-invariante-convexe-reduit-plat}
{\sl Soit $\pref=\mathcal{C}\boxtimes\F$ un pré-feuilletage convexe réduit de degré $d\geq3$ sur $\pp.$ Alors le $d$-tissu $\Leg\pref$ est plat.}
\end{theoreme}

\noindent La démonstration de ce théorème fait intervenir plusieurs résultats intermédiaires, chacun ayant son intérêt propre. Nous commençons par démontrer, pour un pré-feuilletage $\pref=\mathcal{C}\boxtimes\F$ sur $\pp$, une formule~(Lemme~\ref{lem:Delta-Leg-Pref}) pour le discriminant $\Delta(\Leg\pref)$ de $\Leg\pref$. Ensuite, nous décrivons localement, près des discriminants $\Delta(\Leg\mathcal{C})$ et $\Delta(\Leg\pref)$ respectivement, le tissu algébrique $\Leg\mathcal{C}$ (Lemmes~\ref{lem:Leg-C-s-check}~et~\ref{lem:Leg-C-C-check}) et le tissu dual $\Leg\pref$ de $\pref$ (Lemmes~\ref{lem:Leg-pref-s-check}~et~\ref{lem:Leg-pref-C-check}). Enfin, lorsque $\pref$ est convexe, nous établissons une caractérisation (Théorème~\ref{thm:Holomorphie-K-Leg-pref-convexe}) de la platitude du tissu $\Leg\pref$, qui, dans le cas particulier où $\pref$ est convexe réduit, se traduit par un résultat (Corollaire~\ref{cor:platitude-Leg-pref-convexe-reduit}) jouant un rôle essentiel dans la preuve.

\section{Rappel sur les singularités et le diviseur d'inflexion d'un feuilletage de $\pp$}\label{sec:singularité-diviseur-inflexion-tissu-dual}
\bigskip

\noindent Un feuilletage holomorphe $\mathcal{F}$ de degré $d$ sur~$\pp$ est défini en coordonnées homogènes $[x:y:z]$ par une $1$-forme du type  $$\omega=a(x,y,z)\mathrm{d}x+b(x,y,z)\mathrm{d}y+c(x,y,z)\mathrm{d}z,$$ o\`{u} $a,$ $b$ et $c$ sont des polynômes homogènes de degré $d+1$ sans facteur commun satisfaisant la condition d'\textsc{Euler} $i_{\mathrm{R}}\omega=0$, où $\mathrm{R}=x\frac{\partial{}}{\partial{x}}+y\frac{\partial{}}{\partial{y}}+z\frac{\partial{}}{\partial{z}}$ désigne le champ radial et $i_{\mathrm{R}}$ le produit intérieur par $\mathrm{R}$. Le {\sl lieu singulier} $\mathrm{Sing}\mathcal{F}$ de $\mathcal{F}$ est le projectivisé du lieu singulier de~$\omega$ $$\mathrm{Sing}\omega=\{(x,y,z)\in\mathbb{C}^3\,\vert \, a(x,y,z)=b(x,y,z)=c(x,y,z)=0\}.$$

\noindent Rappelons quelques notions locales attachées au couple $(\mathcal{F},s)$, où $s\in\Sing\mathcal{F}$. Le germe de $\F$ en $s$ est défini, à multiplication près par une unité de l'anneau local $\mathcal{O}_s$ en $s$, par un champ de vecteurs
\begin{small}
$\mathrm{X}=A(\mathrm{u},\mathrm{v})\frac{\partial{}}{\partial{\mathrm{u}}}+B(\mathrm{u},\mathrm{v})\frac{\partial{}}{\partial{\mathrm{v}}}$.
\end{small}
\noindent La {\sl multiplicité algébrique} $\nu(\mathcal{F},s)$ de $\mathcal{F}$ en $s$ est donnée par $$\nu(\mathcal{F},s)=\min\{\nu(A,s),\nu(B,s)\},$$ où $\nu(g,s)$ désigne la multiplicité algébrique de la fonction $g$ en $s$. L'{\sl ordre de tangence} de $\mathcal{F}$ avec une droite générique passant par $s$ est l'entier $$\tau(\mathcal{F},s)=\min\{k\geq 1\hspace{1mm}\vert\hspace{1mm}\det(J^{k}_{s}\,\mathrm{X},\mathrm{R}_{s})\not\equiv0\}\geq\nu(\mathcal{F},s),$$ où $J^{k}_{s}\,\mathrm{X}$ désigne le $k$-jet de $\mathrm{X}$ en $s$ et $\mathrm{R}_{s}$ le champ radial centré en $s.$
\smallskip

\noindent La singularité $s$ de $\mathcal{F}$ est dite {\sl radiale} si $\nu(\mathcal{F},s)=1$ et si de plus $\tau(\mathcal{F},s)\geq2$. Si tel est le cas, l'entier naturel $\tau(\mathcal{F},s)-1$, compris entre $1$ et $d-1,$ est appelé l'{\sl ordre de radialité} de $s.$

\noindent Dans la suite, nous désignerons par $\Sigma_{\F}^{\mathrm{rad}}$ l'ensemble des singularités radiales de $\F$ et par $\Sigma_{\F}^{\nu\geq2}$ (resp. $\Sigma_{\F}^{\tau\geq2}$) l'ensemble des $s\in\Sing\F$ tels que $\nu(\F,s)\geq2$ (resp. $\tau(\F,s)\geq2$); notons que l'on a $\Sigma_{\F}^{\tau\geq2}=\Sigma_{\F}^{\mathrm{rad}}\cup\Sigma_{\F}^{\nu\geq2}.$
\medskip

\noindent Rappelons la notion du diviseur d'inflexion de $\F$. Soit $\mathrm{Z}=E\frac{\partial}{\partial x}+F\frac{\partial}{\partial y}+G\frac{\partial}{\partial z}$ un champ de vecteurs homogène de degré $d$ sur $\mathbb{C}^3$ non colinéaire au champ radial décrivant $\mathcal{F},$ {\it i.e.} tel que $\omega=i_{\mathrm{R}}i_{\mathrm{Z}}\mathrm{d}x\wedge\mathrm{d}y\wedge\mathrm{d}z.$ Le {\sl diviseur d'inflexion} de $\mathcal{F}$, noté $\IF$, est le diviseur défini par l'équation
\begin{equation*}
\left| \begin{array}{ccc}
x &  E &  \mathrm{Z}(E) \\
y &  F &  \mathrm{Z}(F)  \\
z &  G &  \mathrm{Z}(G)
\end{array} \right|=0.
\end{equation*}

\noindent D'après~\cite{Per01}, $\deg\IF=3d$, et le support de $\IF$ est exactement l'adhérence de l'ensemble des points d'inflexion des feuilles de $\F.$ Plus~précisément, $\IF$ peut se décomposer en $\IF=\IinvF+\ItrF,$ où le support de $\IinvF$ est l'ensemble des droites invariantes par $\mathcal{F}$ et où le support de $\ItrF$ est l'adhérence de l'ensemble des points d'inflexion des feuilles de $\F$ qui ne sont pas des droites.

\noindent Le feuilletage $\mathcal{F}$ est convexe si et seulement si son diviseur d'inflexion $\IF$ est $\F$-invariant, ou, de façon équivalente, si et seulement si $\IF$ est le produit de droites invariantes. Dire que $\F$ est convexe réduit signifie qu'il est convexe à diviseur d'inflexion réduit, autrement dit que
$\IF$ est le produit de $3d$ droites invariantes distinctes.

\section{Discriminant du tissu dual d'un pré-feuilletage sur $\pp$}
\bigskip

\noindent Dans ce paragraphe nous allons établir une formule pour le discriminant du tissu dual d'un pré-feuilletage sur $\pp.$ Pour ce faire, rappelons d'abord que si $\F$ est un feuilletage sur $\pp$, l'application de \textsc{Gauss} est l'application rationnelle $\mathcal{G}_{\F}\hspace{1mm}\colon\pp\dashrightarrow \pd$ définie en tout point régulier $m$~de~$\F$ par $\mathcal{G}_{\F}(m)=\mathrm{T}^{\mathbb{P}}_{m}\F,$ où $\mathrm{T}^{\mathbb{P}}_{m}\F$ désigne la droite tangente à la feuille de $\F$ passant par $m.$ Si $\mathcal{C}\subset\pp$ est une courbe passant par certains points singuliers de~$\F$, on définit $\mathcal{G}_{\mathcal{F}}(\mathcal{C})$ comme étant l'adhérence de $\mathcal{G}_{\F}(\mathcal{C}\setminus\Sing\F).$ Il résulte de \cite[Lemme~2.2]{BFM14} que
\begin{align}\label{equa:Delta-LegF}
\Delta(\Leg\F)
=\mathcal{G}_{\F}(\ItrF)\cup\check{\Sigma}_{\F}^{\tau\geq2}=\mathcal{G}_{\F}(\ItrF)\cup\check{\Sigma}_{\F}^{\mathrm{rad}}\cup\check{\Sigma}_{\F}^{\nu\geq2},
\end{align}
où $\check{\Sigma}_{\F}^{\mathrm{rad}}$, resp. $\check{\Sigma}_{\F}^{\nu\geq2}$, resp. $\check{\Sigma}_{\F}^{\tau\geq2}$, désigne l'ensemble des droites duales des points de $\Sigma_{\F}^{\mathrm{rad}}$, resp. $\Sigma_{\F}^{\nu\geq2}$, resp.~$\Sigma_{\F}^{\tau\geq2}.$

\noindent Soit maintenant $\mathcal{C}$ une courbe projective réduite de $\pp$; notons qu'en vertu de \cite[\S1.4.2]{PP15} nous avons
\begin{align}\label{equa:Delta-LegC}
\Delta(\Leg\mathcal{C})=\checkC\cup\check{\widehat{\Sing\mathcal{C}}},
\end{align}
où $\checkC$ désigne la courbe duale de $\mathcal{C}$ et $\check{\widehat{\Sing\mathcal{C}}}$ l'ensemble des droites duales des points singuliers de $\mathcal{C}.$
\begin{lem}\label{lem:Delta-Leg-Pref}
{\sl
Soit $\mathscr{F}=\mathcal{C}\boxtimes\F$ un pré-feuilletage sur $\pp.$ Notons $\check{\Sigma}_{\F}^{\mathcal{C}}$ l'ensemble des droites duales des points de $\Sigma_{\F}^{\mathcal{C}}:=\Sing\F\cap\mathcal{C}$. Alors
\begin{align}\label{equa:Delta-Leg-Pref}
&\Delta(\Leg\mathscr{F})
=\Delta(\Leg\mathcal{C})\cup\Delta(\Leg\F)\cup\mathcal{G}_{\F}(\mathcal{C})\cup\check{\Sigma}_{\F}^{\mathcal{C}}
=\checkC\cup\mathcal{G}_{\F}(\ItrF\cup\mathcal{C})\cup\check{\widehat{\Sing\mathcal{C}}}\cup\check{\Sigma}_{\F}^{\mathcal{C}}\cup\check{\Sigma}_{\F}^{\tau\geq2}.
\end{align}

\noindent En particulier, si la courbe $\mathcal{C}$ est invariante par $\F$, alors
\begin{align}\label{equa:Delta-Leg-Pref-C-invariante}
&\hspace{-3.84cm}\Delta(\Leg\mathscr{F})
=\Delta(\Leg\F)\cup\checkC\cup\check{\Sigma}_{\F}^{\mathcal{C}}
=\mathcal{G}_{\F}(\ItrF)\cup\checkC\cup\check{\Sigma}_{\F}^{\mathcal{C}}\cup\check{\Sigma}_{\F}^{\tau\geq2}.
\end{align}
}
\end{lem}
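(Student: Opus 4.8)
\emph{Stratégie de démonstration.} Le plan consiste à exploiter la décomposition $\Leg\pref=\Leg\mathcal{C}\boxtimes\Leg\F$ rappelée dans l'introduction, en partant de l'équation différentielle implicite
\[
\check{F}(p,q,x)=\underbrace{f(x,px-q)}_{=:\check{F}_{\mathcal{C}}}\cdot\underbrace{\big(A(x,px-q)+pB(x,px-q)\big)}_{=:\check{F}_{\F}}=0.
\]
Le discriminant $\Delta(\Leg\pref)$ étant le lieu d'annulation du $x$-discriminant de $\check{F}$, j'utiliserais l'identité classique $\mathrm{disc}(PQ)=\mathrm{disc}(P)\,\mathrm{disc}(Q)\,\mathrm{Res}(P,Q)^{2}$ appliquée à $P=\check{F}_{\mathcal{C}}$ et $Q=\check{F}_{\F}$. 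Cela décompose d'emblée $\Delta(\Leg\pref)$ en la réunion de $\Delta(\Leg\mathcal{C})=\{\mathrm{disc}_{x}\check{F}_{\mathcal{C}}=0\}$, de $\Delta(\Leg\F)=\{\mathrm{disc}_{x}\check{F}_{\F}=0\}$, et du lieu $\{\mathrm{Res}_{x}(\check{F}_{\mathcal{C}},\check{F}_{\F})=0\}$, c'est-à-dire du lieu de tangence $\Tang(\Leg\mathcal{C},\Leg\F)$ des deux tissus.

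Ensuite, j'identifierais ce lieu de tangence. Une racine commune $x_{0}$ de $\check{F}_{\mathcal{C}}(p_{0},q_{0},\cdot)$ et de $\check{F}_{\F}(p_{0},q_{0},\cdot)$ correspond à un point $m=(x_{0},p_{0}x_{0}-q_{0})$ de la droite $\ell=\{y=p_{0}x-q_{0}\}$ (duale de $(p_{0},q_{0})$) tel que $m\in\mathcal{C}$ et $A(m)+p_{0}B(m)=0$. Cette dernière condition signifie, soit que $\ell$ est tangente à $\F$ en $m$, soit que $m\in\Sing\F$. Je distinguerais alors deux cas : si $m\notin\Sing\F$, alors $\ell=\T^{\mathbb{P}}_{m}\F$, donc $(p_{0},q_{0})=\mathcal{G}_{\F}(m)\in\mathcal{G}_{\F}(\mathcal{C})$ ; si $m\in\Sing\F\cap\mathcal{C}=\Sigma_{\F}^{\mathcal{C}}$, alors $A(m)+pB(m)=0$ pour tout $p$, de sorte que $(p_{0},q_{0})$ parcourt la droite duale $\check{m}$ tout entière, d'où la contribution $\check{\Sigma}_{\F}^{\mathcal{C}}$. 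J'obtiendrais ainsi $\Tang(\Leg\mathcal{C},\Leg\F)=\mathcal{G}_{\F}(\mathcal{C})\cup\check{\Sigma}_{\F}^{\mathcal{C}}$, ce qui établit la première égalité de~\eqref{equa:Delta-Leg-Pref}.

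La seconde égalité s'en déduirait en substituant les descriptions connues $\Delta(\Leg\mathcal{C})=\checkC\cup\check{\widehat{\Sing\mathcal{C}}}$ de~\eqref{equa:Delta-LegC} et $\Delta(\Leg\F)=\mathcal{G}_{\F}(\ItrF)\cup\check{\Sigma}_{\F}^{\tau\geq2}$ de~\eqref{equa:Delta-LegF}, puis en regroupant $\mathcal{G}_{\F}(\ItrF)\cup\mathcal{G}_{\F}(\mathcal{C})=\mathcal{G}_{\F}(\ItrF\cup\mathcal{C})$. Pour le cas particulier où $\mathcal{C}$ est invariante par $\F$, je remarquerais qu'en un point lisse de $\mathcal{C}$ régulier pour $\F$ la tangente à $\F$ coïncide avec la tangente à $\mathcal{C}$, d'où $\mathcal{G}_{\F}(\mathcal{C})\subseteq\checkC$ ; de plus les points singuliers d'une courbe invariante sont des singularités de $\F$, donc $\Sing\mathcal{C}\subseteq\Sigma_{\F}^{\mathcal{C}}$ et $\check{\widehat{\Sing\mathcal{C}}}\subseteq\check{\Sigma}_{\F}^{\mathcal{C}}$. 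Ces deux inclusions absorbent les termes correspondants et fournissent~\eqref{equa:Delta-Leg-Pref-C-invariante}.

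La principale difficulté résidera dans le passage rigoureux de l'annulation (schématique) du résultant à la description géométrique (ensembliste) du lieu de tangence, et notamment dans le traitement de la contribution dégénérée des points de $\Sigma_{\F}^{\mathcal{C}}$, qui engendrent des droites duales entières plutôt que des points isolés. Il faudra également veiller à la bonne prise en compte des adhérences intervenant dans la définition de $\mathcal{G}_{\F}(\mathcal{C})$, ainsi qu'à ce qu'aucune composante ne soit perdue ni introduite à tort lors de la lecture dans la carte affine $(x,y)$, en contrôlant séparément le comportement à l'infini.
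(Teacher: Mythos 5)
Votre démarche est essentiellement celle du texte : décomposer $\Delta(\Leg\pref)$ en $\Delta(\Leg\mathcal{C})\cup\Delta(\Leg\F)\cup\Tang(\Leg\mathcal{C},\Leg\F)$, identifier $\Tang(\Leg\mathcal{C},\Leg\F)=\mathcal{G}_{\F}(\mathcal{C})\cup\check{\Sigma}_{\F}^{\mathcal{C}}$, puis substituer les formules~(\ref{equa:Delta-LegF}) et~(\ref{equa:Delta-LegC}) et utiliser, dans le cas invariant, $\mathcal{G}_{\F}(\mathcal{C})\subseteq\checkC$ et $\Sing\mathcal{C}\subset\Sigma_{\F}^{\mathcal{C}}$. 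La seule différence est que vous explicitez, via le résultant et la discussion des racines communes, l'identification du lieu de tangence que l'article se contente de citer (argument de la page~33 de la thèse de Beltr\'an), ce qui est correct.
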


\begin{proof}[\sl D\'emonstration]
Nous avons
\begin{align*}
\Delta(\Leg\mathscr{F})=\Delta(\Leg\mathcal{C})\cup\Delta(\Leg\F)\cup\Tang(\Leg\mathcal{C},\Leg\F),
\end{align*}
et un argument de~\cite[page~33]{Bel14} montre que $$\Tang(\Leg\mathcal{C},\Leg\F)=\mathcal{G}_{\F}(\mathcal{C})\cup\check{\Sigma}_{\F}^{\mathcal{C}},$$ d'où la première égalité dans~(\ref{equa:Delta-Leg-Pref}).
La seconde en découle, compte tenu des formules~(\ref{equa:Delta-LegF}) et (\ref{equa:Delta-LegC}).

\noindent Lorsque $\mathcal{C}$ est invariante par $\F$, alors $\mathcal{G}_{\F}(\mathcal{C})=\checkC$ et $\Sing\mathcal{C}\subset\Sigma_{\F}^{\mathcal{C}}$, d'où (\ref{equa:Delta-Leg-Pref-C-invariante}).
\end{proof}

\begin{cor}\label{cor:Delta-Leg-Pref-convexe}
{\sl
Soit $\mathscr{F}=\mathcal{C}\boxtimes\F$ un pré-feuilletage convexe sur $\pp.$ Alors
\begin{align}\label{equa:Delta-Leg-Pref-convexe}
&\Delta(\Leg\mathscr{F})
=\checkC\cup\check{\Sigma}_{\F}^{\mathcal{C}}\cup\check{\Sigma}_{\F}^{\tau\geq2}
=\checkC\cup\check{\Sigma}_{\F}^{\mathcal{C}}\cup\check{\Sigma}_{\F}^{\mathrm{rad}}\cup\check{\Sigma}_{\F}^{\nu\geq2}.
\end{align}

\noindent En particulier, si $\pref$ est convexe réduit, alors
\begin{align}\label{equa:Delta-Leg-Pref-convexe-reduit}
&\hspace{-3.8cm}\Delta(\Leg\mathscr{F})=\checkC\cup\check{\Sigma}_{\F}^{\mathcal{C}}\cup\check{\Sigma}_{\F}^{\mathrm{rad}}.
\end{align}
}
\end{cor}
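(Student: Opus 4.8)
The plan is to deduce both displayed formulas directly from Lemma~\ref{lem:Delta-Leg-Pref}, using only the definition of convexity. Since $\pref=\mathcal{C}\boxtimes\F$ is convex, the curve $\mathcal{C}$ is by hypothesis invariant by $\F$, so I may apply the specialized formula~(\ref{equa:Delta-Leg-Pref-C-invariante}) rather than the general~(\ref{equa:Delta-Leg-Pref}); this gives
\[
\Delta(\Leg\mathscr{F})=\mathcal{G}_{\F}(\ItrF)\cup\checkC\cup\check{\Sigma}_{\F}^{\mathcal{C}}\cup\check{\Sigma}_{\F}^{\tau\geq2}.
\]
The first step is then to observe that convexity of $\F$ forces the transverse inflection divisor to vanish: by definition a convex foliation has no inflection point on its non-linear leaves, so $\ItrF=0$, that is $\IF=\IinvF$, and hence $\mathcal{G}_{\F}(\ItrF)=\vide$. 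This already yields the first equality of~(\ref{equa:Delta-Leg-Pref-convexe}), and the second follows at once from the decomposition $\Sigma_{\F}^{\tau\geq2}=\Sigma_{\F}^{\mathrm{rad}}\cup\Sigma_{\F}^{\nu\geq2}$ recalled above, after passing to dual lines.

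For the reduced convex case~(\ref{equa:Delta-Leg-Pref-convexe-reduit}), comparison with~(\ref{equa:Delta-Leg-Pref-convexe}) shows that it suffices to prove $\check{\Sigma}_{\F}^{\nu\geq2}=\vide$, i.e.\ that a reduced convex foliation has no singularity of algebraic multiplicity $\geq2$. This is where the reducedness hypothesis genuinely enters, and it is the only substantial point. For a reduced convex $\F$ one has $\IF=\IinvF$, a product of $3d$ \emph{distinct} invariant lines, so $\IF$ is a reduced curve whose local branches at any $s\in\Sing\F$ are precisely the invariant lines of $\F$ through $s$. The idea is to show that this reduced structure is incompatible with $\nu(\F,s)\geq2$: working in a local chart in which $\F$ is defined by a germ of vector field $\mathrm{X}$ at $s$ whose lowest-order part has degree $\nu=\nu(\F,s)$, one computes the determinantal equation defining $\IF$ near $s$ and checks that, as soon as $\nu\geq2$, this equation is no longer a reduced product of lines (its tangent cone at $s$ acquires a multiple or non-linear factor), contradicting the reducedness of $\IF$. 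Hence every singularity satisfies $\nu(\F,s)=1$—so it is non-degenerate or radial—whence $\Sigma_{\F}^{\nu\geq2}=\vide$ and therefore $\check{\Sigma}_{\F}^{\nu\geq2}=\vide$, which turns~(\ref{equa:Delta-Leg-Pref-convexe}) into~(\ref{equa:Delta-Leg-Pref-convexe-reduit}).

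The main obstacle is precisely this last claim: the reductions of the first paragraph are formal consequences of Lemma~\ref{lem:Delta-Leg-Pref} and of $\ItrF=0$, but the vanishing of $\Sigma_{\F}^{\nu\geq2}$ requires a genuine local computation relating the algebraic multiplicity $\nu(\F,s)$ to the local behaviour of the inflection divisor $\IF$ at $s$. Care is needed because a singularity carrying many invariant lines (for instance a radial point where several lines of $\IF$ cross) is perfectly compatible with $\IF$ reduced; the mere count of branches through $s$ therefore does not settle the matter, and one must distinguish the radial case ($\nu=1$) from the case $\nu\geq2$ on the local model itself. Alternatively, this vanishing may be read off from the description of the singularities of convex foliations in~\cite{MP13}; once it is granted, both formulas of the corollary follow.
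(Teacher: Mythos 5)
Votre premier paragraphe est correct et co\"{\i}ncide avec l'argument du texte~: la convexit\'e de $\F$ \'equivaut \`a $\ItrF=0$, donc $\mathcal{G}_{\F}(\ItrF)=\vide$ dans la formule~(\ref{equa:Delta-Leg-Pref-C-invariante}), et la seconde \'egalit\'e de~(\ref{equa:Delta-Leg-Pref-convexe}) suit de $\Sigma_{\F}^{\tau\geq2}=\Sigma_{\F}^{\mathrm{rad}}\cup\Sigma_{\F}^{\nu\geq2}$. Le point d\'elicat est bien, comme vous le dites, l'annulation de $\Sigma_{\F}^{\nu\geq2}$ dans le cas convexe r\'eduit~; mais c'est pr\'ecis\'ement l\`a que votre argument principal comporte une lacune. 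Le texte r\`egle ce point par une seule r\'ef\'erence~: d'apr\`es \cite[Lemme~6.8]{BM18Bull}, toutes les singularit\'es d'un feuilletage convexe r\'eduit sont non-d\'eg\'en\'er\'ees, donc de multiplicit\'e alg\'ebrique~$1$.

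La ``v\'erification'' locale que vous proposez \`a la place --- \og d\`es que $\nu\geq2$, l'\'equation d\'eterminantale de $\IF$ en $s$ n'est plus un produit r\'eduit de droites, son c\^one tangent acqu\'erant un facteur multiple ou non lin\'eaire\fg{} --- est fausse telle quelle. Contre-exemple~: pour le germe $\mathrm{X}=uv\,\frac{\partial}{\partial u}+(u^{2}-v^{2})\frac{\partial}{\partial v}$, de multiplicit\'e $\nu=2$, l'\'equation locale d'inflexion $A\,\mathrm{X}(B)-B\,\mathrm{X}(A)$ vaut $u\,(2v^{4}+u^{2}v^{2}-u^{4})=u\,(\sqrt{2}\,v-u)(\sqrt{2}\,v+u)(v-\mathrm{i}u)(v+\mathrm{i}u)$, produit de cinq droites deux \`a deux distinctes~: le c\^one tangent est parfaitement r\'eduit et form\'e de droites. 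La contradiction avec la convexit\'e ne vient donc pas de la r\'eduction du c\^one tangent, mais du fait que toutes les branches locales de $\IF$ en $s$ (au moins $3\nu-1$ d'entre elles) devraient \^etre des droites \emph{invariantes}, alors que le jet de plus bas degr\'e $\mathrm{X}_{\nu}$ n'admet qu'au plus $\nu+1<3\nu-1$ droites invariantes d\`es que $\nu\geq2$ (et dans l'exemple ci-dessus, seules trois des cinq droites sont invariantes)~; le cas o\`u $\mathrm{X}_{\nu}$ est un multiple du champ radial demande encore un traitement s\'epar\'e. Votre esquisse omet cette id\'ee essentielle et ne fournirait pas de contradiction sur l'exemple pr\'ec\'edent. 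La porte de sortie que vous mentionnez en fin de preuve --- invoquer la description connue des singularit\'es des feuilletages convexes r\'eduits --- est exactement ce que fait le texte, et c'est la voie \`a retenir si l'on ne veut pas d\'evelopper l'argument local complet.
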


\begin{proof}[\sl D\'emonstration]
La formule~(\ref{equa:Delta-Leg-Pref-convexe}) résulte de la convexité de $\pref$ et de la formule~(\ref{equa:Delta-Leg-Pref-C-invariante}). Lorsque $\pref$ est convexe réduit, toutes les singularités de $\F$ sont non-dégénérées (\cite[Lemme~6.8]{BM18Bull}) et sont donc de multiplicité algébrique $1$, {\it i.e.} $\Sigma_{\F}^{\nu\geq2}=\vide,$ d'où~(\ref{equa:Delta-Leg-Pref-convexe-reduit}).
\end{proof}

\section{Description des tissus algébriques près du discriminant}
\bigskip

\noindent Dans ce paragraphe, nous décrivons, pour une courbe projective réduite $\mathcal{C}\subset\pp$, le tissu algébrique $\Leg\mathcal{C}$ près d'une composante irréductible de son discriminant $\Delta(\Leg\mathcal{C})$. Le lemme suivant traite le cas de la droite~$\check{s}$ duale d'un point $s\in\mathcal{C}$ ($\check{s}\subset\Delta(\Leg\mathcal{C})\Longleftrightarrow s\in\Sing\mathcal{C}$).

\begin{lem}\label{lem:Leg-C-s-check}
{\sl
Soient $\mathcal{C}\subset\pp$ une courbe projective réduite de degré $k$ et $s$ un point de $\mathcal{C}$ de multiplicité algébrique $n.$ Alors, au voisinage d'un point générique de la droite $\check{s}$ duale de $s$, le $k$-tissu algébrique $\Leg\mathcal{C}$ peut se décomposer en
\begin{equation}\label{equa:Leg-C-s-check}
\Leg\mathcal{C}=\W_n\boxtimes\W_{k-n},
\end{equation}
où $\W_n$ est un $n$-tissu admettant $\check{s}$ comme courbe totalement invariante et $\W_{k-n}$ est un $(k-n)$-tissu régulier et transverse à $\check{s}.$}
\end{lem}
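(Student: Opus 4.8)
The plan is to work in a suitable affine chart and to factor the implicit equation $\check F(p,q,x)=f(x,px-q)$ defining $\Leg\mathcal{C}$ according to how its roots in $x$ split above a generic point of $\check s$. First I would normalise by choosing affine coordinates $(x,y)$ on $\pp$ with $s=(0,0)$; then the dual line is $\check s=\{q=0\}$ in the chart $(p,q)$ of $\pd$, and a generic point of $\check s$ is $(p_0,0)$, corresponding to the generic line $\ell=\{y=p_0x\}$ through $s$. Writing $f=f_n+f_{n+1}+\cdots+f_k$ as a sum of homogeneous components, with $f_n\not\equiv0$ since $s$ has multiplicity $n$, one computes
\[
\check F(p,0,x)=f(x,px)=x^{n}\,g(x,p),\qquad g(x,p)=\sum_{j=n}^{k}x^{\,j-n}f_j(1,p),
\]
so that $g(0,p)=f_n(1,p)$ is a nonzero polynomial in $p$, and the leading coefficient of $\check F$ in $x$, namely $f_k(1,p)$, is likewise not identically zero. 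Hence for generic $p_0$ the equation $\check F(p_0,0,x)=0$ has $x=0$ as a root of multiplicity exactly $n$ and $k-n$ further roots $\beta_1,\dots,\beta_{k-n}$, all different from $0$.

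Next I would separate the cluster $\{0\}$ from the cluster $\{\beta_1,\dots,\beta_{k-n}\}$. Since these are disjoint at $(p_0,0)$ and the leading coefficient $f_k(1,p)$ is a unit there, Hensel's lemma in the (Henselian) local ring of germs at $(p_0,0)\in\pd$ yields a factorisation
\[
\check F(p,q,x)=\check F_0(p,q,x)\cdot\check F_1(p,q,x),
\]
where $\check F_0$ has degree $n$ in $x$ with roots tending to $0$ (a Weierstrass polynomial $x^n+\cdots$), and $\check F_1$ has degree $k-n$ in $x$ with roots tending to $\beta_1,\dots,\beta_{k-n}$. Setting $\W_n:=\{\check F_0=0\}$ and $\W_{k-n}:=\{\check F_1=0\}$ gives the decomposition $\Leg\mathcal{C}=\W_n\boxtimes\W_{k-n}$ of~(\ref{equa:Leg-C-s-check}).

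To conclude I would verify the two claimed properties. Restricting the factorisation to $q=0$ and comparing with $\check F(p,0,x)=x^{n}g(x,p)$, uniqueness forces $\check F_0(p,0,x)=x^{n}$; thus $\{q=0\}=\check s$ is a solution of $\check F_0=0$ of multiplicity $n$, i.e. a common leaf of all the foliations composing $\W_n$, which is precisely the assertion that $\check s$ is totally invariant by $\W_n$. For $\W_{k-n}$, its roots at $(p_0,0)$ are the $\beta_i\neq0$, so each leaf has slope $\mathrm{d}q/\mathrm{d}p=\beta_i\neq0$ and is transverse to $\check s=\{q=0\}$; and for generic $p_0$ the $\beta_i$ are simple and pairwise distinct, so $\W_{k-n}$ is regular.

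The main obstacle is the genericity bookkeeping guaranteeing that the two root clusters are exactly as above: one must argue that for generic $p_0$ the line $\ell=\{y=p_0x\}$ meets the tangent cone of $\mathcal{C}$ at $s$ with multiplicity exactly $n$ (equivalently $f_n(1,p_0)\neq0$, so that $x=0$ has multiplicity exactly $n$) and meets $\mathcal{C}$ elsewhere in $k-n$ distinct reduced points with pairwise distinct tangent directions (so that the $\beta_i$ are simple and distinct). This is where the reducedness of $\mathcal{C}$ is used, together with the fact that $\check s$ is the component of $\Delta(\Leg\mathcal{C})$ along which only the $n$ branches through $s$ collide; once these separations are secured, the Hensel factorisation and the verifications above are routine.
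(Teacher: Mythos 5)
Your argument is correct and is essentially the paper's: after fixing $s=(0,0)$ so that $\check s=\{q=0\}$ and $\Leg\mathcal{C}$ is given by $P(x,px-q)=0$, the paper splits off the degree-$n$ factor whose roots collapse to $x=0$ along $q=0$ via the Weierstrass preparation theorem, which is exactly your Hensel-type separation of the root cluster at $x=0$ from the $k-n$ nonzero roots $\beta_i$, and it concludes transversality and regularity of $\W_{k-n}$ the same way (the leaves being pieces of lines, distinct nonzero slopes at a generic point of $\check s$ give regularity). The genericity bookkeeping you flag is handled implicitly in the paper by working at a generic point of $\check s$, i.e.\ away from the other components $\checkC\cup\check{\widehat{\Sing\mathcal{C}}}$ of $\Delta(\Leg\mathcal{C})$, so there is no gap.
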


\begin{proof}[\sl D\'emonstration]
Fixons une carte affine $(x,y)$ de $\pp$ telle que $s=(0,0)$ et soit $P(x,y)=0$ une équation affine de $\mathcal{C}.$ Dans la carte affine $(p,q)$ de $\pd$ associée à la droite $\{y=px-q\}\subset\pp,$ on a $\check{s}=\{q=0\}$ et le $k$-tissu $\Leg\mathcal{C}$ est décrit par l'équation différentielle $F(p,q,x):=P(x,px-q)=0$ avec $x=\frac{\mathrm{d}q}{\mathrm{d}p}.$ L'égalité $\nu(\mathcal{C},s)=n$ entraîne que $F\in\C[p,q,x]$ est de la forme
\begin{align*}
F(p,q,x)=a_{0}(q)q^n+a_{1}(p,q)q^{n-1}x+\cdots+a_{n-1}(p,q)q\hspace{0.2mm}x^{n-1}+a_{n}(p,q)x^{n}+\cdots+a_{k}(p,q)x^{k},
\end{align*}
avec $a_{n}(p,0)\not\equiv0.$ Par application du théorème de préparation de \textsc{Weierstrass} on peut écrire
\begin{align*}
F(p,q,x)=U(p,q,x)\left(x^{n}-q\left(\bar{a}_{n-1}(p,q)x^{n-1}+\cdots+\bar{a}_{0}(p,q)\right)\right),
\end{align*}
avec $U(p,0,0)\not\equiv0.$ Ainsi, au voisinage d'un point générique de $\check{s}=\{q=0\},$ $\Leg\mathcal{C}=\W_n\boxtimes\W_{k-n},$ où $\W_{k-n}$ est un $(k-n)$-tissu transverse à $\check{s}$ et $\check{s}$ est totalement invariante par $\W_{n}.$ Les feuilles de $\W_{k-n}$ étant des morceaux de droites, la transversalité de $\W_{k-n}$ à $\check{s}$ implique sa régularité près de $\check{s}.$
\end{proof}

\begin{rem}\label{rem:point-ordinaire}
Lorsque $s$ est un point multiple ordinaire de $\mathcal{C}$ de multiplicité $n$, {\it i.e.} si $\mathcal{C}$ admet $n$ tangentes différentes par le point $s$, alors le $n$-tissu $\W_n$ est complètement décomposable et sa courbure est holomorphe le long de $\check{s}.$

\noindent En effet, si $P(x,y)=\prod_{i=1}^{n}(\alpha_i\,x+\beta_i\,y)+\cdots$, avec $[\alpha_i:\beta_i]\neq[\alpha_j:\beta_j]$ si $i\neq j$, alors, au voisinage d'un point générique de $\check{s}$, le tissu $\W_n$ se décompose en $\W_n=\boxtimes_{i=1}^{n}\F_{i},$ où~$\F_{i}:\mathrm{d}q-q\left(\frac{\beta_i}{\alpha_i+\beta_i\,p}+\cdots\right)\mathrm{d}p=0.$ Le~Lemme~2.1~de~\cite{Bed24arxiv} assure que
\begin{align*}
K(\Leg\mathcal{C})=K(\W_n)-(n-2)\sum_{i=1}^{n}K(\F_i\boxtimes\W_{k-n})+\sum_{1\leq i<j\leq n}K(\F_i\boxtimes\F_j\boxtimes\W_{k-n})+\binom{n-1}{2}K(\W_{k-n}).
\end{align*}
Comme $\W_{k-n}$ (resp. $\F_i\boxtimes\W_{k-n}$) est régulier près de $\check{s}$, $K(\W_{k-n})$  (resp. $K(\F_i\boxtimes\W_{k-n})$) est holomorphe sur~$\check{s}.$ La courbure de $\F_i\boxtimes\F_j\boxtimes\W_{k-n}$ est holomorphe le long de $\check{s}$ par application de \cite[Théorème~1]{MP13}. Puisque $K(\Leg\mathcal{C})\equiv0$, il en résulte que $K(\W_n)$ est holomorphe sur $\check{s}.$
\end{rem}

\noindent Le lemme suivant donne une description du tissu $\Leg\mathcal{C}$ près de la courbe $\checkC_{0}$ duale d'une composante irréductible $\mathcal{C}_{0}$ de $\mathcal{C}$ qui n'est pas une droite.

\begin{lem}\label{lem:Leg-C-C-check}
{\sl
Soit $\mathcal{C}\subset\pp$ une courbe projective réduite de degré $k\geq2.$ On suppose que $\mathcal{C}$ n'est pas une union de droites et soit $\mathcal{C}_{0}$ une composante irréductible de $\mathcal{C}$ de degré supérieur ou égal à $2.$ Alors, la courbe duale $\checkC_{0}$ est invariante par $\Leg\mathcal{C}.$ De plus, au voisinage d'un point générique de $\checkC_{0}$, le $k$-tissu $\Leg\mathcal{C}$ peut se décomposer en
\begin{equation}\label{equa:Leg-C-C-check}
\Leg\mathcal{C}=\W_2\boxtimes\W_{k-2},
\end{equation}
où $\W_2$ est un $2$-tissu ayant $\checkC_{0}$ comme courbe totalement invariante et dont la multiplicité du discriminant $\Delta(\W_{2})$ le long de $\checkC_{0}$ est minimale, égale à $1,$ et $\W_{k-2}$ est un $(k-2)$-tissu régulier et transverse~à~$\checkC_{0}.$
}
\end{lem}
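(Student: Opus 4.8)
Le plan est de raisonner au voisinage d'un point g\'en\'erique $\check{\ell}_{0}\in\checkC_{0}$, correspondant \`a une droite $\ell_{0}$ tangente \`a $\mathcal{C}_{0}$ en un unique point lisse $m_{0}$, avec un contact d'ordre exactement $2$, et coupant $\mathcal{C}$ transversalement en $k-2$ autres points simples $m_{1},\ldots,m_{k-2}$. De telles droites forment un ouvert dense de $\checkC_{0}$: en effet, $\mathcal{C}_{0}$ \'etant irr\'eductible de degr\'e $\geq2$, elle n'est pas une droite et ne poss\`ede donc qu'un nombre fini de points d'inflexion et de tangentes multiples. L'invariance de $\checkC_{0}$ par $\Leg\mathcal{C}$ r\'esultera de la bidualit\'e: comme $\Leg\mathcal{C}_{0}$ est un sous-tissu de $\Leg\mathcal{C}$, il suffit de voir que $\checkC_{0}$ est une feuille de $\Leg\mathcal{C}_{0}$, ce qui provient de ce que la direction tangente \`a $\checkC_{0}$ au point dual de $\ell=\mathrm{T}_{m}\mathcal{C}_{0}$ est la droite $\check{m}$, pr\'ecis\'ement la direction de la feuille de $\Leg\mathcal{C}_{0}$ en ce point.

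Pour la d\'ecomposition, je fixerais une carte affine $(x,y)$ de $\pp$ et la carte duale $(p,q)$ associ\'ee \`a $\{y=px-q\}$, choisies de sorte que les abscisses de $m_{0},m_{1},\ldots,m_{k-2}$ soient deux \`a deux distinctes; le tissu $\Leg\mathcal{C}$ y est d\'ecrit par $F(p,q,x):=P(x,px-q)=0$, o\`u $P$ est une \'equation affine de $\mathcal{C}.$ Les $k$ racines en $x$ de $F(p_{0},q_{0},\cdot)$ sont les abscisses des points de $\mathcal{C}\cap\ell_{0}$: la tangence en $m_{0}$ donne une racine double en l'abscisse $x_{0}$ de $m_{0}$, et les $k-2$ intersections transverses des racines simples. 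Comme dans la preuve du Lemme~\ref{lem:Leg-C-s-check}, le th\'eor\`eme de pr\'eparation de \textsc{Weierstrass} permettra de factoriser $F=U\cdot Q\cdot S$ au voisinage de la fibre au-dessus de $\check{\ell}_{0}$, avec $U$ inversible, $Q$ un polyn\^ome de \textsc{Weierstrass} de degr\'e $2$ en $x$ d\'efinissant un $2$-tissu $\W_{2}$, et $S$ de degr\'e $k-2$ en $x$ d\'efinissant un $(k-2)$-tissu $\W_{k-2}$. Les feuilles de $\W_{k-2}$ \'etant des droites du plan dual (les duales des points de $\mathcal{C}$ voisins des $m_{i}$), ce tissu est r\'egulier; et comme la pente $x_{i}\neq x_{0}$ de sa feuille issue de $\check{\ell}_{0}$ diff\`ere de la pente $x_{0}$ de $\checkC_{0}$, il est transverse \`a $\checkC_{0}.$

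Il resterait \`a \'etablir les propri\'et\'es de $\W_{2}$. Pour cela, je me ram\`enerais, par un changement de coordonn\'ees affine, \`a $m_{0}=(0,0)$, $\ell_{0}=\{y=0\}$ et $\mathcal{C}_{0}\colon y=\tfrac{c}{2}\,x^{2}+\mathrm{O}(x^{3})$ avec $c\neq0$, la non-nullit\'e de $c$ exprimant que $m_{0}$ n'est pas un point d'inflexion. On a alors $\check{\ell}_{0}=(0,0)$, et la r\'esolution en $x$ de $px-q=\tfrac{c}{2}x^{2}+\cdots$ fournit les deux pentes $x_{\pm}=\tfrac{1}{c}\bigl(p\pm\sqrt{p^{2}-2cq+\cdots}\bigr)$ des feuilles de $\W_{2}$, de sorte que $\Delta(\W_{2})=(x_{+}-x_{-})^{2}=\tfrac{4}{c^{2}}\bigl(p^{2}-2cq+\cdots\bigr).$ Un calcul direct montrera que le lieu $\{x_{+}=x_{-}\}$ co\"incide avec $\checkC_{0}$ pr\`es de $\check{\ell}_{0}$, que sur ce lieu la valeur commune des pentes \'egale $\tfrac{\mathrm{d}q}{\mathrm{d}p}$ le long de $\checkC_{0}$ --- d'o\`u le fait que $\checkC_{0}$ est une feuille de $\W_{2}$ \`a laquelle les deux feuilles sont tangentes, \emph{i.e.} est totalement invariante ---, et enfin que $\Delta(\W_{2})$ s'annule \`a l'ordre exactement $1$ le long de $\checkC_{0}.$

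Le point d\'elicat sera cette derni\`ere \'etape. C'est la non-nullit\'e du coefficient $c$, \og courbure\fg{} de $\mathcal{C}_{0}$ en un point g\'en\'erique, qui assure simultan\'ement l'embranchement en $\sqrt{\phantom{q}}$ des deux feuilles de $\W_{2}$ le long de $\checkC_{0}$ et la minimalit\'e, \'egale \`a $1$, de la multiplicit\'e de $\Delta(\W_{2})$ le long de $\checkC_{0}.$ Il faudra donc justifier avec soin que le contact de $\ell_{0}$ avec $\mathcal{C}_{0}$ est g\'en\'eriquement d'ordre exactement $2$, ce qui d\'ecoule pr\'ecis\'ement de ce que $\mathcal{C}_{0}$ n'est pas une droite.
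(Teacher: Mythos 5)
Votre d\'emonstration est correcte, mais elle suit une route sensiblement diff\'erente de celle du texte. Pour l'invariance de $\checkC_{0}$, l'argument de bidualit\'e est le m\^eme. En revanche, pour la d\'ecomposition, le texte ne calcule rien: il invoque d'une part le fait (tir\'e de \cite[\S1.4.2]{PP15}) que $\mathrm{mult}(\Delta(\Leg\mathcal{C}),\checkC_{0})=1$, d'autre part la minoration $\mathrm{mult}(\Delta(\W_n),\checkC_{0})\geq n-1$ de \cite[Lemme~2.5]{MP13} pour un $n$-tissu laissant $\checkC_{0}$ totalement invariante, et conclut $n=2$ par simple comparaison num\'erique. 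Vous, au contraire, red\'emontrez \og \`a la main\fg{} l'\'enonc\'e de multiplicit\'e: la pr\'eparation de \textsc{Weierstrass} au-dessus d'une tangente g\'en\'erique isole un facteur quadratique dont le discriminant $(x_{+}-x_{-})^{2}=\tfrac{4}{c^{2}}(p^{2}-2cq+\cdots)$ s'annule \`a l'ordre exactement $1$ le long de $\checkC_{0}$, la non-nullit\'e de $c$ \'etant garantie par le fait qu'un point g\'en\'erique de $\mathcal{C}_{0}$ n'est pas d'inflexion. Votre approche est plus autonome (elle n'utilise ni \cite{PP15} ni le Lemme~2.5 de \cite{MP13}) et rend transparente l'origine de la multiplicit\'e $1$ (l'embranchement en racine carr\'ee des deux abscisses de tangence); son co\^ut est de devoir justifier les faits classiques de r\'eflexivit\'e en caract\'eristique nulle que vous signalez vous-m\^eme --- contact g\'en\'eriquement d'ordre exactement $2$, finitude des inflexions et des tangentes multiples, et aussi le fait que la tangente g\'en\'erique \`a $\mathcal{C}_{0}$ \'evite $\Sing\mathcal{C}$ et coupe les autres composantes transversalement (ce qui r\'esulte de ce que $\checkC_{0}$ n'est contenue ni dans une droite duale d'un point ni dans la duale d'une autre composante). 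Ces points sont standards sur $\C$, donc votre sch\'ema de preuve aboutit; veillez seulement \`a les \'enoncer explicitement, ainsi qu'au choix d'une carte affine dans laquelle $\ell_{0}$ ne rencontre pas $\mathcal{C}$ sur la droite \`a l'infini, pour que $F(p_{0},q_{0},\cdot)$ ait bien $k$ racines.
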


\begin{proof}[\sl D\'emonstration]
Le sous-tissu $\Leg\mathcal{C}_{0}$ étant formé des droites tangentes à $\checkC_{0}$, nous avons $\mathrm{T}_{m}\checkC_{0}\subset\mathrm{T}_{m}\Leg\mathcal{C}_{0}$ pour tout point régulier $m$ de $\checkC_{0},$ ce qui montre que $\checkC_{0}$ est invariante par $\Leg\mathcal{C}_{0}$ et donc par $\Leg\mathcal{C}.$

\noindent Notons $\W(m)$ le germe du $k$-tissu $\Leg\mathcal{C}$ en un point générique $m\in\checkC_{0}.$ D'après \cite[\S1.4.2]{PP15}, nous avons $\mathrm{mult}(\Delta(\W(m)),\checkC_{0})=1$; si $k=2$ alors $\checkC_{0}=\checkC$ est totalement invariante par $\W(m)$ et $\W(m)=\W_2$ est irréductible par application de~\cite[Lemme~2.5]{MP13}.

\noindent Supposons $k\geq3$; alors $\W(m)$ n'est pas irréductible, car sinon $\checkC_{0}$ serait totalement invariante par $\W(m),$ de sorte que (\emph{cf.}~\cite[Lemme~2.5]{MP13}) $1=\mathrm{mult}(\Delta(\W(m)),\checkC_{0})\geq k-1>1$, ce qui est impossible. Nous pouvons donc écrire $\W(m)=\W_n\boxtimes\W_{k-n},$ où $\W_n$ est un $n$-tissu ayant $\checkC_{0}$ comme courbe totalement invariante, avec $n\geq2,$ et $\W_{k-n}$ est un $(k-n)$-tissu régulier et transverse à $\checkC_{0}.$ Alors, toujours en vertu~de~\cite[Lemme~2.5]{MP13}, nous avons $1=\mathrm{mult}(\Delta(\W_n),\checkC_{0})\geq n-1,$ d'où $n=2$; ainsi $\W(m)=\W_2\boxtimes\W_{k-2}$.
\end{proof}

\section{Description du tissu dual d'un pré-feuilletage convexe sur $\pp$ près de son discriminant}
\bigskip

\noindent Nous nous intéressons ici à la description du tissu dual $\Leg\pref$ d'un pré-feuilletage $\pref=\F\boxtimes\mathcal{C}$ sur $\pp$ près de certaines composantes~irréductibles~du~discriminant $\Delta(\Leg\pref)$, en nous focalisant sur le cas où $\pref$~est~convexe. Nous commençons par traiter le cas de la droite~$\check{s}$ duale d'une singularité $s$ de $\F$ sur $\mathcal{C}$ de multiplicité algébrique~$1.$

\begin{lem}\label{lem:Leg-pref-s-check}
{\sl
Soit $\pref=\mathcal{C}\boxtimes\F$ un pré-feuilletage de degré $d$ sur $\pp.$ Supposons que $\F$ ait une singularité~$s$ sur $\mathcal{C}$ de multiplicité algébrique $1$ ({\it i.e.} $\nu(\F,s)=1$). Désignons par $n$ la multiplicité algébrique de $\mathcal{C}$~en~$s$ et~par~$\tau$ l'ordre de tangence de $\F$ avec une droite générique passant par $s$ ({\it i.e.} $\tau:=\tau(\F,s)$). Alors, au voisinage d'un point générique $m$ de la droite $\check{s}$ duale de $s$, le~$d$-tissu $\Leg\pref$ peut se décomposer en
\begin{equation}\label{equa:Leg-pref-s-check-n-geq-0}
\Leg\pref=\W_n\boxtimes\W_{\tau}\boxtimes\W_{d-n-\tau},
\end{equation}
où $\W_n$ est un $n$-tissu admettant $\check{s}$ comme courbe totalement invariante, $\W_{\tau}$ est un $\tau$-tissu laissant $\check{s}$ totalement invariante et tel que $\mathrm{mult}(\Delta(\W_{\tau}),\check{s})=\tau-1,$ et $\W_{d-n-\tau}$ est un $(d-n-\tau)$-tissu transverse à $\check{s}.$ De plus si $\pref$ est convexe alors $\W_{d-n-\tau}$ est~régulier au voisinage de $m.$
}
\end{lem}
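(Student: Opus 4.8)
The plan is to exploit the factorization $\Leg\pref=\Leg\mathcal{C}\boxtimes\Leg\F$ and to treat the two factors separately near $\check{s}$, the totally invariant webs being produced one by the curve and one by the foliation. Since $s$ is a point of multiplicity $n$ of $\mathcal{C}$, Lemma~\ref{lem:Leg-C-s-check} already gives, near a generic point $m$ of $\check{s}$, a decomposition $\Leg\mathcal{C}=\mathcal{W}_n\boxtimes\mathcal{W}_{k-n}$, where $k=\deg\mathcal{C}$ is the co-degree of $\pref$, the factor $\mathcal{W}_n$ admitting $\check{s}$ as a totally invariant curve and $\mathcal{W}_{k-n}$ being regular and transverse to $\check{s}$. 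It then remains to analyse the $(d-k)$-web $\Leg\F$ in a neighbourhood of $m$ and to recombine the transverse pieces, noting that $d-n-\tau=(k-n)+(d-k-\tau)$.

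For the second factor I would work in an affine chart $(x,y)$ with $s=(0,0)$ and $\F$ given by $\omega=a\,\diffx+b\,\diffy$, $\pgcd(a,b)=1$; in the dual chart $(p,q)$ the line $\check{s}$ is $\{q=0\}$ and $\Leg\F$ is described by $\check{F}(p,q,x)=a(x,px-q)+p\,b(x,px-q)=0$ with $x=\diffq/\diffp$. The key point is that, for generic $p$, the restriction $\check{F}(p,0,x)=a(x,px)+p\,b(x,px)$ vanishes at $x=0$ to order exactly $\tau=\tau(\F,s)$: indeed $x\,\check{F}(p,0,x)$ is the restriction to the generic line $\{y=px\}$ of $a\,x+b\,y=\det(\mathrm{X},\mathrm{R}_{s})$, whose order of vanishing at $s$ equals $\tau+1$ by the definition of the tangency order together with $\nu(\F,s)=1$. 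Writing $\check{F}(p,0,x)=x^{\tau}g(p,x)$ with $g(p,0)\neq0$ and applying the \textsc{Weierstrass} preparation theorem at $(p_0,0,0)$, I obtain $\check{F}=U\cdot W$ with $U$ a unit and $W(p,q,x)=x^{\tau}+c_{\tau-1}(p,q)x^{\tau-1}+\cdots+c_0(p,q)$ a \textsc{Weierstrass} polynomial with $W(p,0,x)=x^{\tau}$, i.e. $c_i(p,0)=0$ for all $i$. This yields $\Leg\F=\mathcal{W}_\tau\boxtimes\mathcal{W}_{d-k-\tau}$ near $m$, where $\mathcal{W}_\tau=\{W=0\}$ has $\check{s}$ as a totally invariant curve ($q\equiv0$ is an integral curve, root of $W(p,0,\cdot)$ of multiplicity $\tau$) and $\mathcal{W}_{d-k-\tau}=\{U=0\}$ has its slopes bounded away from $0$, hence is transverse to $\check{s}$; in particular $\Tang(\mathcal{W}_\tau,\mathcal{W}_{d-k-\tau})=\emptyset$ near $\check{s}$.

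To pin down $\mathrm{mult}(\Delta(\mathcal{W}_\tau),\check{s})=\tau-1$ I would use the Newton polygon of $W$ in the variables $(x,q)$. All coefficients satisfy $\mathrm{ord}_q c_i\geq1$, while $\mathrm{ord}_q c_0=\mathrm{ord}_q\check{F}(p,q,0)=\mathrm{ord}_q\bigl(a(0,-q)+p\,b(0,-q)\bigr)$ equals $1$ for generic $p$, provided the linear coordinates are chosen so that the vertical direction is not contained in the kernel of the linear part of $\mathrm{X}$ at $s$ (non-zero since $\nu(\F,s)=1$). Hence the lower boundary of the Newton polygon is the single segment from $(0,1)$ to $(\tau,0)$, the $\tau$ roots form a single \textsc{Puiseux} cycle $x\sim q^{1/\tau}$, and $\Delta(\mathcal{W}_\tau)=\prod_{i<j}(x_i-x_j)^2$ vanishes along $\{q=0\}$ to order $2\binom{\tau}{2}/\tau=\tau-1$; the inequality $\mathrm{mult}\geq\tau-1$ is in any case guaranteed by \cite[Lemme~2.5]{MP13}. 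Setting $\mathcal{W}_{d-n-\tau}:=\mathcal{W}_{k-n}\boxtimes\mathcal{W}_{d-k-\tau}$ gives the announced decomposition $\Leg\pref=\mathcal{W}_n\boxtimes\mathcal{W}_\tau\boxtimes\mathcal{W}_{d-n-\tau}$, the last factor being transverse to $\check{s}$.

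Finally, the regularity of $\mathcal{W}_{d-n-\tau}$ when $\pref$ is convex is where I expect the only genuine difficulty. The factor $\mathcal{W}_{k-n}$ is already regular by Lemma~\ref{lem:Leg-C-s-check}, so it suffices to treat $\mathcal{W}_{d-k-\tau}$ and the mutual position of the pieces. A generic point $m\in\check{s}$ corresponds to a line through $s$ whose $d-k-\tau$ tangency points with $\F$ distinct from $s$ are generic points of non-linear leaves; since $\F$ is convex its inflection divisor $\IF$ is a union of invariant lines, so these points are not inflection points and the \textsc{Gauss} map $\mathcal{G}_{\F}$ is an immersion there. Consequently the corresponding $d-k-\tau$ leaves of $\Leg\F$ are smooth and pairwise transverse near $m$, i.e. $\mathcal{W}_{d-k-\tau}$ is regular. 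The cross-tangencies between the leaves issued from $\mathcal{C}$ and those issued from $\F$ are governed by $\Tang(\Leg\mathcal{C},\Leg\F)=\mathcal{G}_{\F}(\mathcal{C})\cup\check{\Sigma}_{\F}^{\mathcal{C}}$ (see the proof of Lemma~\ref{lem:Delta-Leg-Pref}); as the components of this locus other than $\check{s}$ meet $\check{s}$ in finitely many points, a generic $m$ avoids them off $\check{s}$, so the leaves of $\mathcal{W}_{k-n}$ and $\mathcal{W}_{d-k-\tau}$ are mutually transverse there. The hardest step, on which I would concentrate, is precisely this convexity-based exclusion of extra tangencies in the transverse factor; the rest follows by combining \textsc{Weierstrass} preparation, the Newton-polygon computation of the discriminant, and Lemma~\ref{lem:Leg-C-s-check}.
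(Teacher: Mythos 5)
Your proof follows the same skeleton as the paper's: split $\Leg\pref=\Leg\mathcal{C}\boxtimes\Leg\F$ near a generic $m\in\check{s}$, invoke Lemma~\ref{lem:Leg-C-s-check} for the first factor, obtain $\Leg\F=\W_{\tau}\boxtimes\W_{d-k-\tau}$ for the second, and set $\W_{d-n-\tau}:=\W_{k-n}\boxtimes\W_{d-k-\tau}$. The only real difference is that where the paper simply quotes \cite[Proposition~3.3]{MP13} for the decomposition of $\Leg\F$ (including $\mathrm{mult}(\Delta(\W_{\tau}),\check{s})=\tau-1$) and an argument from the proof of \cite[Th\'eor\`eme~4.2]{MP13} for the regularity of $\W_{d-k-\tau}$ under convexity, you re-derive both: your \textsc{Weierstrass}/Newton-polygon computation is correct (identifying the vanishing order of $\check F(p,0,\cdot)$ at $x=0$ with $\tau$ via $\det(\mathrm{X},\mathrm{R}_{s})$, then reading off the single \textsc{Puiseux} cycle and the discriminant order $2\binom{\tau}{2}/\tau=\tau-1$ is exactly the content of the cited proposition), and your Gauss-map/inflection argument for the regularity of $\W_{d-k-\tau}$ is the one used in \emph{loc.~cit.} So the route is the same, only more self-contained.

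The one step where you are genuinely looser than the paper is the mutual transversality of $\W_{k-n}$ and $\W_{d-k-\tau}$ in the convex case. You argue that $\Tang(\W_{k-n},\W_{d-k-\tau})\subset\Tang(\Leg\mathcal{C},\Leg\F)=\mathcal{G}_{\F}(\mathcal{C})\cup\check{\Sigma}_{\F}^{\mathcal{C}}$ and that a generic $m\in\check{s}$ avoids the components of this locus other than $\check{s}$; but $\check{s}$ itself \emph{is} a component of that tangency locus (since $s\in\Sigma_{\F}^{\mathcal{C}}$), and a nonempty divisor contained in $\check{s}$ near $m$ must equal $\check{s}$ there, so your argument does not exclude that $\check{s}$ is a component of $\Tang(\W_{k-n},\W_{d-k-\tau})$. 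One must still check that the tangency along $\check{s}$ is entirely carried by the sub-webs $\W_n$ and $\W_{\tau}$ that leave $\check{s}$ invariant. The repair is short: if $\check{s}\subset\Tang(\W_{k-n},\W_{d-k-\tau})$, every line through $s$ near $m$ would be tangent to $\F$ at a point of $\mathcal{C}\setminus\{s\}$, forcing $\check{s}\subset\mathcal{G}_{\F}(\mathcal{C})=\checkC$ (recall $\mathcal{C}$ is $\F$-invariant here), which is impossible by biduality. The paper's own argument is cleaner and avoids the issue altogether: a leaf of $\W_{k-n}$ is a line $\check{z}$ with $z\in\mathcal{C}$, already tangent to $\Leg\F$ at the point $\mathcal{G}_{\F}(z)\in\checkC$, and $\deg(\Leg\F)=1$ forbids a second tangency at $m\notin\checkC$. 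With that step supplied, your proof is complete.
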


\begin{proof}[\sl D\'emonstration]
Posons $k:=\deg\mathcal{C}.$ D'après~\cite[Proposition~3.3]{MP13}, au voisinage de $m$, nous pouvons décomposer le tissu $\Leg\F$ sous la forme $\Leg\F=\W_{\tau}\boxtimes\W_{d-k-\tau},$ où $\W_{\tau}$ est un $\tau$-tissu ayant $\check{s}$ comme courbe totalement invariante, avec $\mathrm{mult}(\Delta(\W_{\tau}),\check{s})=\tau-1,$ et $\W_{d-k-\tau}$ est un $(d-k-\tau)$-tissu transverse~à~$\check{s}.$ Par~ailleurs,~au~voisinage~de~$m$, nous pouvons écrire $\Leg\mathcal{C}=\W_n\boxtimes\W_{k-n},$ où les tissus $\W_n$ et $\W_{k-n}$ sont comme dans le Lemme~\ref{lem:Leg-C-s-check}. Alors $\W_{d-n-\tau}:=\W_{k-n}\boxtimes\W_{d-k-\tau}$ est transverse à $\check{s}$ et, au voisinage de $m$, $\Leg\pref$ admet la décomposition~(\ref{equa:Leg-pref-s-check-n-geq-0}).
\vspace{1mm}

\noindent Supposons que $\pref$ soit convexe. Alors, la convexité de $\F$ implique, par un argument de la démonstration de~\cite[Théorème~4.2]{MP13}, que le tissu $\W_{d-k-\tau}$ est régulier au voisinage de $m.$ Comme les feuilles de $\W_{k-n}$ sont des morceaux de droites tangentes à $\Leg\F$ le long de $\checkC$, l'égalité $\deg(\Leg\F)=1$ entraîne que $\Tang(\W_{k-n},\W_{d-k-\tau})=\emptyset.$ Il en résulte que $\W_{d-n-\tau}$ est régulier au voisinage de $m.$
\end{proof}

\begin{rem}\label{rem:Leg-pref-s-check-n=0}
Lorsque $s\not\in\mathcal{C}$, {\it i.e.} si $n=0$, alors $\Leg\mathcal{C}=\W_{k}$ est régulier et transverse à $\check{s}$ et nous avons
\begin{equation}\label{equa:Leg-pref-s-check-n=0}
\Leg\pref=\W_{\tau}\boxtimes\W_{d-\tau},
\end{equation}
où $\W_{d-\tau}:=\W_{k}\boxtimes\W_{d-k-\tau}$ est transverse à $\check{s}.$
\end{rem}

\noindent Maintenant, nous allons démontrer le lemme suivant, qui décrit le tissu $\Leg\pref=\Leg\mathcal{C}\boxtimes\Leg\F$ près de la courbe $\checkC_{0}$ duale d'une composante irréductible $\mathcal{C}_{0}$ de $\mathcal{C}$ qui n'est pas une droite.

\begin{lem}\label{lem:Leg-pref-C-check}
{\sl
Soit $\pref=\mathcal{C}\boxtimes\F$ un pré-feuilletage convexe de degré $d\geq3$ sur $\pp$, dont la courbe associée $\mathcal{C}$ n'est pas une union de droites. Soit $\mathcal{C}_{0}$ une composante irréductible de $\mathcal{C}$ de degré supérieur ou égal à $2.$ Alors, au voisinage d'un point générique de $\checkC_{0}$, le $d$-tissu $\Leg\pref$ peut se décomposer en
\begin{equation}\label{equa:Leg-pref-C-check}
\Leg\pref=\F_{0}\boxtimes\W_{2}\boxtimes\W_{d-3},
\end{equation}
où $\W_{d-3}$ est un $(d-3)$-tissu régulier et transverse à $\checkC_{0},$ $\W_2$ est un $2$-tissu ayant $\checkC_{0}$ comme courbe totalement invariante et tel que $\mathrm{mult}(\Delta(\W_2),\checkC_{0})=1$, et $\F_{0}$ est un feuilletage laissant $\checkC_{0}$ invariante.
}
\end{lem}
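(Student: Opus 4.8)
The plan is to derive the decomposition by combining the algebraic description of $\Leg\Ccal$ near $\checkC_{0}$ (Lemma~\ref{lem:Leg-C-C-check}) with the local structure of $\Leg\F$ near $\checkC_{0}$, and then to regroup the resulting factors. Throughout, write $k:=\deg\Ccal$; since the component $\Ccal_{0}$ has degree $\geq2$ and is invariant by $\F$, the foliation $\F$ is not a pencil of lines, so $\deg\F=d-k\geq1$. Recall from the introduction that $\Leg\pref=\Leg\Ccal\boxtimes\Leg\F$. Applying Lemma~\ref{lem:Leg-C-C-check} to $\Ccal$, at a generic point $m\in\checkC_{0}$ one gets $\Leg\Ccal=\W_{2}\boxtimes\W_{k-2}$, where $\W_{2}$ has $\checkC_{0}$ as totally invariant curve with $\mathrm{mult}(\Delta(\W_{2}),\checkC_{0})=1$ and $\W_{k-2}$ is a regular $(k-2)$-web transverse to $\checkC_{0}$. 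This yields the factor $\W_{2}$ of the statement.

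The core step is the description of $\Leg\F$ near $\checkC_{0}$. As $\pref$ is convex, $\Ccal_{0}$ is invariant by $\F$, so at a generic point of $\Ccal_{0}$ (smooth on $\Ccal_{0}$ and regular for $\F$) the tangent to $\Ccal_{0}$ is the tangent to the $\F$-leaf through it; hence $\mathcal{G}_{\F}(\Ccal_{0})=\checkC_{0}$. It follows from the equation of $\Leg\F$ recalled in the introduction that the dual curve $\mathcal{G}_{\F}(L)$ of any leaf $L$ of $\F$ is a leaf of $\Leg\F$; in particular $\checkC_{0}$ is a leaf of $\Leg\F$, which provides a foliation $\F_{0}$ leaving $\checkC_{0}$ invariant. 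Moreover, the convexity of $\F$ gives $\ItrF=0$, so by~(\ref{equa:Delta-LegF}) the discriminant $\Delta(\Leg\F)=\check{\Sigma}_{\F}^{\tau\geq2}$ is a finite union of lines. Since $\Ccal_{0}$ has degree $\geq2$, its dual $\checkC_{0}$ is not a line (otherwise its bidual $\Ccal_{0}$ would be a point), whence $\checkC_{0}\not\subset\Delta(\Leg\F)$ and $\Leg\F$ is a regular $(d-k)$-web near a generic $m\in\checkC_{0}$. Therefore $\Leg\F=\F_{0}\boxtimes\W_{d-k-1}$ near $m$, with $\W_{d-k-1}$ a regular $(d-k-1)$-web transverse to $\checkC_{0}$.

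Regrouping the two decompositions,
\[
\Leg\pref=(\W_{2}\boxtimes\W_{k-2})\boxtimes(\F_{0}\boxtimes\W_{d-k-1})=\F_{0}\boxtimes\W_{2}\boxtimes\W_{d-3},\qquad\W_{d-3}:=\W_{k-2}\boxtimes\W_{d-k-1},
\]
which has the correct total degree $2+(k-2)+1+(d-k-1)=d$. It remains only to verify that $\W_{d-3}$ is regular and transverse to $\checkC_{0}$. Each of the factors $\W_{k-2}$ and $\W_{d-k-1}$ is regular, transverse to $\checkC_{0}$, and has pairwise transverse leaves; the point to establish is that no leaf of $\W_{k-2}$ is tangent to a leaf of $\W_{d-k-1}$ at a generic point of $\checkC_{0}$.

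I expect this last transversality to be the main obstacle. I would treat it with the tangency identity from the proof of Lemma~\ref{lem:Delta-Leg-Pref}, namely $\Tang(\Leg\Ccal,\Leg\F)=\mathcal{G}_{\F}(\Ccal)\cup\check{\Sigma}_{\F}^{\Ccal}$: any tangency between a leaf of $\Leg\Ccal$ and a leaf of $\Leg\F$ at $m$ forces $m\in\mathcal{G}_{\F}(\Ccal)\cup\check{\Sigma}_{\F}^{\Ccal}$. For generic $m\in\checkC_{0}$ the only branch of this locus through $m$ is $\checkC_{0}=\mathcal{G}_{\F}(\Ccal_{0})$, which corresponds to the point of tangency of the line $\ell_{m}\subset\pp$ with $\Ccal_{0}$ and accounts precisely for the tangency between $\W_{2}$ and $\F_{0}$; the remaining pieces $\mathcal{G}_{\F}(\Ccal_{i})$ ($i\neq0$) and the finitely many lines of $\check{\Sigma}_{\F}^{\Ccal}$ meet $\checkC_{0}$ in finitely many points only. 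Hence for generic $m$ no leaf of $\W_{k-2}$ is tangent to a leaf of $\W_{d-k-1}$, so $\W_{d-3}$ is regular and transverse to $\checkC_{0}$, which establishes~(\ref{equa:Leg-pref-C-check}).
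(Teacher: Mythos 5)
Your proof is correct and follows essentially the same route as the paper's: the same two local decompositions $\Leg\mathcal{C}=\W_2\boxtimes\W_{k-2}$ (Lemme~\ref{lem:Leg-C-C-check}) and $\Leg\F=\F_0\boxtimes\W_{d-k-1}$ (via the $\Leg\F$-invariance de $\checkC_0$ et $\Delta(\Leg\F)=\check{\Sigma}_{\F}^{\tau\geq2}$), suivies du m\^eme regroupement $\W_{d-3}:=\W_{k-2}\boxtimes\W_{d-k-1}$. La seule diff\'erence, mineure, est la derni\`ere \'etape: le papier exclut $\Tang(\W_{k-2},\W_{d-k-1})$ en notant que les feuilles de $\W_{k-2}$ sont des droites et que $\deg(\Leg\F)=1$, tandis que vous invoquez l'identit\'e $\Tang(\Leg\mathcal{C},\Leg\F)=\mathcal{G}_{\F}(\mathcal{C})\cup\check{\Sigma}_{\F}^{\mathcal{C}}$ --- deux formulations du m\^eme fait.
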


\begin{proof}[\sl D\'emonstration]
Comme $\mathcal{C}$ est $\F$-invariante par hypothèse, chaque composante irréductible de $\checkC$, et en particulier $\checkC_{0}$, est $\Leg\F$-invariante, \emph{voir}~\cite[page~175]{MP13}. La convexité de $\F$ et la formule~(\ref{equa:Delta-LegF}) entraînent que $\Delta(\Leg\F)=\check{\Sigma}_{\F}^{\tau\geq2}$, de sorte que $\checkC_{0}\not\subset\Delta(\Leg\F).$ Par suite, au voisinage d'un point générique $m$~de~$\checkC_{0}$, on~peut~écrire~$\Leg\F=\F_0\boxtimes\W_{d-k-1}$, où $\F_0$ est un feuilletage laissant $\checkC_{0}$ invariante et $\W_{d-k-1}$ est un $(d-k-1)$-tissu régulier et transverse à $\checkC_{0},$ avec $k:=\deg\mathcal{C}.$ D'autre part, le $k$-tissu $\Leg\mathcal{C}$ se décompose au voisinage~de~$m$ sous la forme
$\Leg\mathcal{C}=\W_2\boxtimes\W_{k-2},$ où les tissus $\W_2$ et $\W_{k-2}$ sont comme~dans~le~Lemme~\ref{lem:Leg-C-C-check}. Alors~$\Leg\pref=\F_{0}\boxtimes\W_{2}\boxtimes\W_{d-3}$, où $\W_{d-3}:=\W_{k-2}\boxtimes\W_{d-k-1}$ est transverse à $\checkC_{0}.$ Les feuilles de $\W_{k-2}$ étant des morceaux de droites tangentes à $\Leg\F$ le long de $\checkC,$ l'égalité $\deg(\Leg\F)=1$ implique que $\Tang(\W_{k-2},\W_{d-k-1})=\emptyset$ et donc que $\W_{d-3}$ est régulier au voisinage de $m.$ Le lemme est ainsi démontré.

\end{proof}

\section{Caractérisation de la platitude du tissu dual d'un pré-feuilletage convexe sur $\pp$}
\bigskip

\noindent Dans cette section, nous démontrons un théorème qui caractérise la platitude du tissu dual d'un pré-feuilletage convexe $\pref$ sur $\pp$. Nous le particularisons ensuite au cas où $\pref$ est convexe réduit.

\begin{thm}\label{thm:Holomorphie-K-Leg-pref-convexe}
{\sl
Soit $\pref=\mathcal{C}\boxtimes\F$ un pré-feuilletage convexe de degré $d\geq3$ sur $\pp.$ Posons $\Xi_{\F}^{\mathcal{C}}:=\Sing\mathcal{C}\setminus\Sigma_{\F}^{\mathrm{rad}}$ et notons $\check{\Xi}_{\F}^{\mathcal{C}}$ l'ensemble des droites duales des points de $\Xi_{\F}^{\mathcal{C}}.$ Alors la courbure de $\Leg\pref$ est holomorphe sur $\pd\setminus\big(\check{\Xi}_{\F}^{\mathcal{C}}\cup\check{\Sigma}_{\F}^{\nu\geq2}\big).$ En particulier, le $d$-tissu $\Leg\pref$ est plat si et seulement si $K(\Leg\pref)$ est holomorphe~le~long~de~$\check{\Xi}_{\F}^{\mathcal{C}}\cup\check{\Sigma}_{\F}^{\nu\geq2}.$
}
\end{thm}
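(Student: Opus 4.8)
The plan is to reduce, via the characterisation of flatness recalled in the introduction, the statement to a holomorphy check along the generic points of each irreducible component of $\Delta(\Leg\pref)$. Since $K(\Leg\pref)$ is a meromorphic $2$-form whose polar locus is a divisor carried by $\Delta(\Leg\pref)$, it suffices to prove that $K(\Leg\pref)$ is holomorphic at a generic point of every component $D\subset\Delta(\Leg\pref)$ with $D\not\subset\check{\Xi}_{\F}^{\mathcal{C}}\cup\check{\Sigma}_{\F}^{\nu\geq2}$: the polar divisor then meets none of these components, which yields holomorphy on the whole of $\pd\setminus(\check{\Xi}_{\F}^{\mathcal{C}}\cup\check{\Sigma}_{\F}^{\nu\geq2})$, and the final equivalence is immediate. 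By~(\ref{equa:Delta-Leg-Pref-convexe}) the components to treat are: the dual curves $\checkC_{0}$ of the non-linear irreducible components $\mathcal{C}_{0}$ of $\mathcal{C}$; the lines $\check{s}$ with $s\in\Sigma_{\F}^{\mathrm{rad}}$; and the lines $\check{s}$ with $s\in\Sigma_{\F}^{\mathcal{C}}$, $\nu(\F,s)=\tau(\F,s)=1$ and $s\notin\Sing\mathcal{C}$, every other line of $\check{\Sigma}_{\F}^{\mathcal{C}}$ being radial or lying in the excluded set.

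The common device is as follows. Near a generic point $m$ of such a $D$ I would use the local splittings of Lemmes~\ref{lem:Leg-pref-s-check} and~\ref{lem:Leg-pref-C-check} (or Remarque~\ref{rem:Leg-pref-s-check-n=0}) to write $\Leg\pref=\W^{\flat}\boxtimes\W^{\sharp}$, where $\W^{\sharp}$ is regular and transverse to $D$ and the foliations of $\W^{\flat}$ are those tangent to $D$ (with $D$ totally invariant or an envelope). Expanding $K(\Leg\pref)$ as the sum of the curvatures of its $3$-subwebs (\cite[Lemme~2.1]{Bed24arxiv}, after a ramified pull-back making $\W^{\flat}$ completely decomposable and using the Galois-invariance of the curvature), a $3$-subweb containing at least two foliations of $\W^{\sharp}$ does not have $D$ in its discriminant, hence contributes a holomorphic term; a $3$-subweb with exactly one foliation of $\W^{\sharp}$ and two foliations tangent to $D$ contributes a term holomorphic along $D$ by~\cite[Th\'eor\`eme~1]{MP13}, the leaves of $\W^{\sharp}$ being line segments, exactly as in Remarque~\ref{rem:point-ordinaire}. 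Thus holomorphy of $K(\Leg\pref)$ along $D$ reduces to holomorphy of $K(\W^{\flat})$ along $D$. I would also record that the algebraic web $\Leg\mathcal{C}$ is flat, so $K(\Leg\mathcal{C})\equiv0$.

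Two of the cases then follow at once. For $s\in\Sigma_{\F}^{\mathrm{rad}}\setminus\mathcal{C}$ (Remarque~\ref{rem:Leg-pref-s-check-n=0}) one has $\W^{\flat}=\W_{\tau}$, and the reduction reproduces the argument of~\cite[Th\'eor\`eme~4.2]{MP13}: $\W_{\tau}$ is the flat local model of a radial singularity of order $\tau-1$, so $K(\W_{\tau})$ is holomorphic along $\check{s}$. For $s\in\Sigma_{\F}^{\mathcal{C}}$ with $\nu(\F,s)=\tau(\F,s)=1$ and $s\notin\Sing\mathcal{C}$, Lemme~\ref{lem:Leg-pref-s-check} gives $\W^{\flat}=\W_{1}\boxtimes\W_{1}$, i.e. only two foliations leaving $\check{s}$ invariant; there is then no $3$-subweb entirely inside $\W^{\flat}$, so the reduction directly yields $K(\Leg\pref)\equiv0$ along $\check{s}$ through~\cite[Th\'eor\`eme~1]{MP13}.

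The core of the proof is the remaining part, which is genuinely new and comes from the non-linear components of $\mathcal{C}$: the envelope curves $\checkC_{0}$ and the radial singularities lying on $\mathcal{C}$. Along $\checkC_{0}$, Lemme~\ref{lem:Leg-pref-C-check} reduces the problem to the $3$-web $\F_{0}\boxtimes\W_{2}$, all of whose leaves are tangent to $\checkC_{0}$ (the branch $\F_{0}$ of $\Leg\F$ has $\checkC_{0}$ as envelope, since $\mathcal{C}_{0}$ is $\F$-invariant); here the decisive input is the minimality $\mathrm{mult}(\Delta(\W_{2}),\checkC_{0})=1$ of Lemme~\ref{lem:Leg-C-C-check}, which, combined with the convexity of $\F$, should force $K(\F_{0}\boxtimes\W_{2})$ to be holomorphic along $\checkC_{0}$. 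Along $\check{s}$ with $s\in\Sigma_{\F}^{\mathrm{rad}}\cap\mathcal{C}$ the reduction leaves $K(\W_{n}\boxtimes\W_{\tau})$, whose $3$-subwebs all leave $\check{s}$ invariant; the point I would exploit is that a radial singularity is dicritical, so the branches of the $\F$-invariant curve $\mathcal{C}$ through $s$ are smooth with pairwise distinct tangents, i.e. $s$ is an ordinary multiple point of $\mathcal{C}$, whence $\W_{n}$ decomposes into radial-type foliations $\mathrm{d}q-q\,\alpha_{i}=0$ with distinct leading slopes (Remarque~\ref{rem:point-ordinaire}) and, after the substitution $q=\mathrm{e}^{Q}$, every foliation of $\W_{n}\boxtimes\W_{\tau}$ becomes a graph of slope depending only on $p$, making the leading web closed, hence flat. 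Proving that the higher-order terms — constrained by $K(\Leg\mathcal{C})\equiv0$ and by convexity — reintroduce no pole, both in $K(\F_{0}\boxtimes\W_{2})$ and in the all-invariant $3$-subwebs of $\W_{n}\boxtimes\W_{\tau}$, is the main obstacle; it is precisely in this last, radial situation that the hypothesis $s\notin\Xi_{\F}^{\mathcal{C}}$, ruling out the bad singular points of $\mathcal{C}$, is essential.
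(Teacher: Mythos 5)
Your architecture matches the paper's: the same stratification of $\Delta(\Leg\pref)$ via (\ref{equa:Delta-Leg-Pref-convexe}), the same local models from Lemmes~\ref{lem:Leg-pref-s-check} et \ref{lem:Leg-pref-C-check} and Remarque~\ref{rem:Leg-pref-s-check-n=0}, and you correctly isolate the key geometric fact that a radial singularity of $\F$ lying on the invariant curve must be an ordinary multiple point of $\mathcal{C}$ --- this is exactly Lemme~\ref{lem:radiale-ordinaire}, which the paper proves via Seidenberg. The cases $s\in\Sigma_{\F}^{\mathrm{rad}}\setminus\mathcal{C}$ and $s\in\Sigma_{\F}^{\mathcal{C}}$ with $n=\tau=1$ are disposed of as in the paper.

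The two steps you yourself flag (\og should force \fg, \og the main obstacle \fg) are, however, genuine gaps, and they are the heart of the theorem. The difficulty is created by your choice of reduction: you reduce holomorphy of $K(\Leg\pref)$ along $D$ to that of $K(\W^{\flat})$ where $\W^{\flat}$ collects \emph{all} the foliations tangent to $D$, i.e.\ to $K(\W_{n}\boxtimes\W_{\tau})$ at a radial point of $\mathcal{C}$ and to $K(\F_{0}\boxtimes\W_{2})$ along $\checkC_{0}$. These webs mix branches of $\Leg\mathcal{C}$ with branches of $\Leg\F$, and controlling their curvature from scratch is precisely what your leading-order computation does not achieve. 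The paper avoids this by a different grouping (Proposition~\ref{pro:holomorphie-courbure-W-n-W-tau-W-d-tau-n}): setting $\W_{d-n}:=\W_{\tau}\boxtimes\W_{d-n-\tau}$ and expanding via \cite[Lemme~2.1]{Bed24arxiv}, every term except $K(\W_{n})$ is of one of the forms $K(\W_{d-n})$, $K(\F_i\boxtimes\W_{d-n})$, $K(\F_i\boxtimes\F_j\boxtimes\W_{d-n})$, each already known to be holomorphic along the component by \cite[Proposition~2.6]{MP13} and \cite[Proposition~3.9 et Remarque~3.10]{Bed25Dedic}. This reduces the radial case to $K(\W_{n})$ alone, a sub-web of the \emph{algebraic} web $\Leg\mathcal{C}$, and Remarque~\ref{rem:point-ordinaire} closes it using $K(\Leg\mathcal{C})\equiv0$ --- this is where the flatness of $\Leg\mathcal{C}$ actually does the work, not as the side remark you record. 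Likewise the case of $\checkC_{0}$ is not an open analytic problem: the decomposition (\ref{equa:Leg-pref-C-check}) places $\Leg\pref$ exactly in the hypotheses of \cite[Proposition~3.9]{Bed25Dedic} (one foliation leaving $\checkC_{0}$ invariant, a $2$-web with $\checkC_{0}$ totally invariante and minimal discriminant multiplicity, a regular transverse web), which yields the holomorphy directly. Without these two inputs your argument does not close.
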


\noindent La démonstration de ce théorème passe par deux autres énoncés.

\begin{pro}\label{pro:holomorphie-courbure-W-n-W-tau-W-d-tau-n}
{\sl Soit $\W_{\tau}$ un germe de $\tau$-tissu de $(\mathbb{C}^{2},0),\,\tau\geq2.$ Supposons que $\Delta(\W_{\tau})$ possède une composante irréductible $C$ totalement invariante par $\W_{\tau}$ et de multiplicité minimale $\tau-1.$ Soit $\W_{n}=\F_1\boxtimes\cdots\boxtimes\F_n$~un~germe de $n$-tissu de $(\mathbb{C}^{2},0)$ complètement décomposable laissant $C$ totalement invariante. Soit~$\W_{d-n-\tau}$~un~germe~de~$(d-n-\tau)$-tissu régulier de $(\mathbb{C}^{2},0)$ transverse à $C.$ Alors la courbure du $d$-tissu $\W=\W_{n}\boxtimes\W_{\tau}\boxtimes\W_{d-n-\tau}$ est holomorphe le long de $C$ si et seulement si la courbure du $n$-tissu $\W_{n}$ est holomorphe le long~de~$C.$
}
\end{pro}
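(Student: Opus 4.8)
The plan is to prove the sharper assertion that $K(\W)-K(\W_{n})$ is holomorphic along $C$; the equivalence in the statement then follows at once, since all the remaining contributions to $K(\W)$ are holomorphic independently of $\W_{n}$. To organise these contributions I would set $\W':=\W_{\tau}\boxtimes\W_{d-n-\tau}$ and apply the decomposition formula of \cite[Lemme~2.1]{Bed24arxiv} to $\W=\W_{n}\boxtimes\W'$, which is permissible because $\W_{n}=\F_{1}\boxtimes\cdots\boxtimes\F_{n}$ is completely decomposable. This yields
\[
K(\W)=K(\W_{n})-(n-2)\sum_{i=1}^{n}K(\F_{i}\boxtimes\W')+\sum_{1\leq i<j\leq n}K(\F_{i}\boxtimes\F_{j}\boxtimes\W')+\binom{n-1}{2}K(\W'),
\]
so that it suffices to prove that the three families of curvatures $K(\W')$, $K(\F_{i}\boxtimes\W')$ and $K(\F_{i}\boxtimes\F_{j}\boxtimes\W')$ are holomorphic along $C$.

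Each of these webs has the shape (a part leaving $C$ totally invariant)$\,\boxtimes\,\W_{d-n-\tau}$, with $\W_{d-n-\tau}$ regular and transverse to $C$. As in the proofs of Lemmes~\ref{lem:Leg-pref-s-check} and~\ref{lem:Leg-pref-C-check}, each $\F_{i}$ admits $C$ as a leaf, so along $C$ the slopes of the invariant part all tend to the slope of $C$ while those of $\W_{d-n-\tau}$ remain transverse; hence $\Tang(\,\cdot\,,\W_{d-n-\tau})$ does not contain $C$, and the block $\W_{d-n-\tau}$ stays regular near the generic point of $C$. One is then exactly in the situation governed by the holomorphy criterion of \cite[Th\'eor\`eme~1]{MP13} for a superposition of a totally invariant part of controlled discriminant multiplicity and a regular transverse part, applied respectively to the invariant parts $\W_{\tau}$, $\F_{i}\boxtimes\W_{\tau}$ and $\F_{i}\boxtimes\F_{j}\boxtimes\W_{\tau}$.

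The quantity that must be fed to this criterion is the multiplicity of the discriminant of the invariant part along $C$. Using that each $\F_{i}$ is smooth and tangent to $C$ to first order only, with pairwise distinct tangent directions along $C$, whereas the $\tau$ branches of $\W_{\tau}$ are tangent to $C$ of order $1/\tau$ with $\mathrm{mult}(\Delta(\W_{\tau}),C)=\tau-1$, a direct tangency count gives $\mathrm{mult}(\Delta(\W_{\tau}),C)=\tau-1$, then $\mathrm{mult}(\Delta(\F_{i}\boxtimes\W_{\tau}),C)=\tau$ and $\mathrm{mult}(\Delta(\F_{i}\boxtimes\F_{j}\boxtimes\W_{\tau}),C)=\tau+2$; the supplementary units arise from the tangencies of the $\F_{i}$ with the branches of $\W_{\tau}$ (one unit each) and from the ordinary tangency of $\F_{i}$ with $\F_{j}$ along $C$ (one further unit). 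Since the degrees of these invariant parts are $\tau$, $\tau+1$ and $\tau+2$, in the first two cases the multiplicity is minimal, equal to degree minus one, and \cite[Th\'eor\`eme~1]{MP13} applies directly, giving holomorphy of $K(\W')$ and of each $K(\F_{i}\boxtimes\W')$ along $C$.

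The delicate point, which I expect to be the main obstacle, is the last family $K(\F_{i}\boxtimes\F_{j}\boxtimes\W')$: here the invariant part $\F_{i}\boxtimes\F_{j}\boxtimes\W_{\tau}$ is no longer of minimal multiplicity $\tau+1$, but attains the borderline value $\tau+2$ equal to its degree, the excess being produced solely by the ordinary tangency between the two smooth foliations $\F_{i}$ and $\F_{j}$. The core of the argument is to check that this non-minimal, borderline configuration is still covered by the criterion, i.e.\ that the simultaneous tangency of $\F_{i}$, $\F_{j}$ and of the branches of $\W_{\tau}$ with $C$ creates no pole of $K$ along $C$. I would settle this by a residue computation in the normal form $C=\{y=0\}$ with $p_{\F_{i}}\sim\alpha_{i}\,y$ ($\alpha_{i}\neq\alpha_{j}$) and $p_{\W_{\tau}}\sim y^{1/\tau}$, exploiting that the extra tangency is ordinary and that $\W_{\tau}$ realises the minimal multiplicity. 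Granting this, all three families are holomorphic along $C$, whence $K(\W)-K(\W_{n})$ is holomorphic along $C$ and the announced equivalence follows.
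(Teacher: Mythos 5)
Your reduction is exactly the one the paper uses: you set $\W'=\W_{\tau}\boxtimes\W_{d-n-\tau}$, apply \cite[Lemme~2.1]{Bed24arxiv} to $\W=\W_{n}\boxtimes\W'$, and observe that the stated equivalence follows once $K(\W')$, $K(\F_{i}\boxtimes\W')$ and $K(\F_{i}\boxtimes\F_{j}\boxtimes\W')$ are holomorphic along $C$. The divergence --- and the gap --- lies in how these three holomorphy claims are justified. The paper does not reprove them: it quotes \cite[Proposition~2.6]{MP13} for $K(\W')$, \cite[Proposition~3.9]{Bed25Dedic} for $K(\F_{i}\boxtimes\W')$ and \cite[Remarque~3.10]{Bed25Dedic} for $K(\F_{i}\boxtimes\F_{j}\boxtimes\W')$, references tailored precisely to the configurations \og(one or two foliations leaving $C$ invariant)$\,\boxtimes\,$(totally invariant block of minimal multiplicity)$\,\boxtimes\,$(regular transverse block)\fg. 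You instead invoke \cite[Th\'eor\`eme~1]{MP13} as if it were a general criterion phrased in terms of the discriminant multiplicity of the invariant block; as that theorem is used everywhere in this paper (Remarque~\ref{rem:point-ordinaire}, and the case $n=\tau=1$ in the proof of the Th\'eor\`eme~\ref{thm:Holomorphie-K-Leg-pref-convexe}), it concerns \emph{foliations} leaving $C$ invariant superposed with a part whose discriminant avoids $C$, and does not cover a totally invariant $\W_{\tau}$ with $\tau\geq2$. The minimal-multiplicity criterion you actually need is \cite[Proposition~2.6]{MP13}, and even that only disposes of your first case directly.

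The decisive gap is the third family. You correctly single out $K(\F_{i}\boxtimes\F_{j}\boxtimes\W')$ as the case where the invariant block is no longer of minimal discriminant multiplicity, but you then write that you \emph{would} settle it by a residue computation and proceed \og granting this\fg; the key step is asserted, not proved. Moreover, your count $\mathrm{mult}\big(\Delta(\F_{i}\boxtimes\F_{j}\boxtimes\W_{\tau}),C\big)=\tau+2$ presupposes an \emph{ordinary} tangency between $\F_{i}$ and $\F_{j}$ along $C$ (slopes $\sim\alpha_{i}\,y$ with $\alpha_{i}\neq\alpha_{j}$), whereas the Proposition imposes no hypothesis on the pairwise contact of the $\F_{i}$: the computation you sketch would in any case have to handle arbitrary contact order. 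To close the argument you must either carry out that local computation in full generality or, as the paper does, invoke \cite[Proposition~3.9~et~Remarque~3.10]{Bed25Dedic}, which are exactly the statements your second and third cases require.
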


\noindent Cette proposition généralise~\cite[Proposition~2.6]{MP13} et \cite[Proposition~3.9~et~Remarque~3.10]{Bed25Dedic}.

\begin{proof}[\sl D\'emonstration]
En posant $\W_{d-n}:=\W_{\tau}\boxtimes\W_{d-n-\tau}$, \cite[Lemme~2.1]{Bed24arxiv} entraîne que
\begin{align*}
K(\W)=K(\W_n)-(n-2)\sum_{i=1}^{n}K(\F_i\boxtimes\W_{d-n})+\sum_{1\leq i<j\leq n}K(\F_i\boxtimes\F_j\boxtimes\W_{d-n})+\binom{n-1}{2}K(\W_{d-n}).
\end{align*}
Or, $K(\W_{d-n})$, resp. $K(\F_i\boxtimes\W_{d-n})$, resp. $K(\F_i\boxtimes\F_j\boxtimes\W_{d-n})$, est holomorphe le long de $C$, en vertu de \cite[Proposition~2.6]{MP13}, resp.~\cite[Proposition~3.9]{Bed25Dedic}, resp.~\cite[Remarque~3.10]{Bed25Dedic}. D'où l'équivalence entre l'holomorphie sur $C$ de $K(\W)$ et celle de $K(\W_n).$
\end{proof}

\begin{lem}\label{lem:radiale-ordinaire}
{\sl Soit $\F$ un feuilletage sur $\pp$ ayant une courbe algébrique invariante réduite $\mathcal{C}.$ Supposons que $\F$ possède une singularité radiale $s$ sur $\mathcal{C}.$ Alors $s$ est un point multiple ordinaire de $\mathcal{C}.$}
\end{lem}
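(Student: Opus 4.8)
Le plan consiste \`a ramener l'\'enonc\'e \`a une analyse locale en $s$ suivie d'un unique \'eclatement. Fixons des coordonn\'ees locales $(\mathrm{u},\mathrm{v})$ centr\'ees en $s$ et \'ecrivons le germe de $\F$ en $s$ par un champ $\X=A\frac{\partial}{\partial \mathrm{u}}+B\frac{\partial}{\partial \mathrm{v}}$, de parties homog\`enes $\X=\X_1+\X_2+\cdots$. Nous commencerions par observer que la radialit\'e de $s$ contraint fortement le jet de $\X$. Puisque $\nu(\F,s)=1$, on a $\X_1\neq0$; et comme $\det(J^1_s\,\X,\mathrm{R}_s)=A_1\mathrm{v}-B_1\mathrm{u}$, la condition $\tau(\F,s)\geq2$ \'equivaut \`a $A_1\mathrm{v}-B_1\mathrm{u}\equiv0$, c'est-\`a-dire \`a $\X_1=\alpha\,\mathrm{R}_s$ avec $\alpha\neq0$. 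Plus g\'en\'eralement, en posant $\ell=\tau(\F,s)-1$, l'annulation de $\det(J^k_s\,\X,\mathrm{R}_s)=\sum_{j=1}^{k}(A_j\mathrm{v}-B_j\mathrm{u})$ jusqu'\`a l'ordre $\ell$ force $\X_j$ \`a \^etre un multiple homog\`ene de $\mathrm{R}_s$ pour tout $1\leq j\leq\ell$.

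Ensuite, nous \'eclaterions le point $s$. Dans la carte $(\mathrm{u},t)$ de l'\'eclatement donn\'ee par $\mathrm{v}=t\,\mathrm{u}$, de diviseur exceptionnel $E=\{\mathrm{u}=0\}$, un calcul direct fournit, apr\`es simplification par le facteur commun, un champ d\'ecrivant le transform\'e strict $\widetilde{\F}$ de la forme $\bar{A}(\mathrm{u},t)\frac{\partial}{\partial \mathrm{u}}+\mathrm{u}^{\ell-1}\big(c(t)+\mathrm{u}(\cdots)\big)\frac{\partial}{\partial t}$, o\`u $c(t)\not\equiv0$ et $\bar{A}(0,t)=\alpha$ est une constante non nulle. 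Le point cl\'e est que la structure radiale de $\X_1,\ldots,\X_\ell$ assure \`a la fois que le facteur commun aux deux coefficients est exactement $\mathrm{u}$ et que le coefficient de $\frac{\partial}{\partial \mathrm{u}}$ reste une unit\'e le long de $E$; il en r\'esulte que $\widetilde{\F}$ est un feuilletage r\'egulier au voisinage de $E$ et transverse \`a $E$ en chacun de ses points, le m\^eme calcul dans l'autre carte de l'\'eclatement traitant le point restant de $E$.

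Enfin, nous conclurions par l'argument de transform\'ee stricte. La courbe $\mathcal{C}$ \'etant $\F$-invariante, sa transform\'ee stricte $\widetilde{\mathcal{C}}$ est invariante par $\widetilde{\F}$; comme $\widetilde{\F}$ est r\'egulier et transverse \`a $E$ pr\`es de $E$, la courbe $\widetilde{\mathcal{C}}$ est, au voisinage de $E$, une r\'eunion de feuilles de $\widetilde{\F}$, donc lisse, transverse \`a $E$, et rencontrant $E$ en des points deux \`a deux distincts, puisqu'un seul germe de feuille passe par chaque point de $E$. Ceci signifie pr\'ecis\'ement que $s$ est un point multiple ordinaire de $\mathcal{C}$. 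L'obstacle principal r\'eside dans le calcul d'\'eclatement, qu'il faut mener de fa\c{c}on uniforme en l'ordre de radialit\'e $\ell$: c'est la radialit\'e de tous les jets interm\'ediaires $\X_1,\ldots,\X_\ell$ qui garantit simultan\'ement la transversalit\'e de $\widetilde{\F}$ \`a $E$ et le fait qu'un unique \'eclatement suffit.
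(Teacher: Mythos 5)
Votre d\'emonstration est correcte, mais elle suit une route r\'eellement diff\'erente de celle du texte. L'article proc\`ede de fa\c{c}on plus alg\'ebrique: il d\'ecompose l'\'equation locale $P$ de $\mathcal{C}$ en facteurs irr\'eductibles dans $\C[[x,y]]$ (la r\'eduction de $\mathcal{C}$ donnant des exposants \'egaux \`a $1$), puis invoque directement le Th\'eor\`eme~8 de \textsc{Seidenberg} pour affirmer que chaque branche invariante en une singularit\'e radiale est lisse, de tangentes deux \`a deux distinctes; la partie homog\`ene de plus bas degr\'e de $P$ est alors un produit de formes lin\'eaires distinctes. Vous red\'emontrez essentiellement, \`a la main, le cas du th\'eor\`eme de \textsc{Seidenberg} dont on a besoin: votre calcul du jet ($\X_j=g_{j-1}\mathrm{R}_s$ pour $j\leq\ell$, avec $\X_1=\alpha\,\mathrm{R}_s$, $\alpha\neq0$) est exact, l'\'eclatement donne bien $\mathrm{u}(\alpha+O(\mathrm{u}))\frac{\partial}{\partial\mathrm{u}}+\mathrm{u}^{\ell}(c(t)+O(\mathrm{u}))\frac{\partial}{\partial t}$, donc apr\`es division par $\mathrm{u}$ un feuilletage r\'egulier transverse au diviseur exceptionnel (le diviseur est dicritique), et l'argument de transform\'ee stricte conclut correctement. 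Votre approche a l'avantage d'\^etre autonome et de rendre explicite le m\'ecanisme g\'eom\'etrique (un seul \'eclatement s\'epare les branches), au prix d'un calcul que la r\'ef\'erence \`a \cite{Sei68} permet d'\'eviter; la preuve du texte est plus courte mais repose sur un r\'esultat externe. Un seul point m\'eriterait d'\^etre explicit\'e chez vous: pour passer de \og{}branches lisses \`a tangentes distinctes\fg{} \`a \og{}point multiple ordinaire\fg{}, il faut utiliser que $\mathcal{C}$ est r\'eduite (sinon le c\^one tangent de l'\'equation pourrait avoir des facteurs multiples); c'est une hypoth\`ese de l'\'enonc\'e, que l'article mobilise express\'ement via la d\'ecomposition de $P$, et que vous utilisez implicitement.
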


\begin{proof}[\sl D\'emonstration]
Choisissons des coordonnées affines $(x,y)$ telles que $s=(0,0)$ et soit $P(x,y)=0$ une équation affine de $\mathcal{C}.$ Soit $P=\prod_{i=1}^{r}f_{i}^{\,k_i}$ une décomposition de $P\in\C[x,y]$ en facteurs irréductibles dans $\C[[x,y]]$; comme $P$ est par hypothèse réduit, nous avons $k_i=1$ pour tout $i\in\{1,\ldots,r\},$ \emph{cf.}~\cite[Chapitre~6]{Chen78}. Par ailleurs, le fait que~$s\in\Sigma_{\F}^{\mathrm{rad}}$ entraîne, d'après~\cite[Théorème~8]{Sei68}, que $f_i(x,y)=a_i\,x+b_i\,y+\cdots$, avec $[a_i:b_i]\in\sph$~et~$[a_i:b_i]\neq[a_j:b_j]$ si $i\neq j.$ Il en résulte que la partie homogène de plus bas degré de $P$ est~égale~à~$\prod_{i=1}^{r}(a_i\,x+b_i\,y),$ ce qui montre que $s$ est un point multiple ordinaire de $\mathcal{C}$ de multiplicité $r.$
\end{proof}

\begin{proof}[\sl D\'emonstration du Théorème~\ref{thm:Holomorphie-K-Leg-pref-convexe}]
D'abord, soit $\mathcal{C}_0$ une composante irréductible de $\Ccal$ qui n'est pas une droite; le $d$-tissu $\Leg\pref$ se décompose près de $\checkC_{0}$ sous la forme (\ref{equa:Leg-pref-C-check}). Ainsi $K(\Leg\pref)$ est holomorphe le~long~de~$\checkC_{0}$ par application~de~\cite[Proposition~3.9]{Bed25Dedic}.

\noindent Ensuite, soit $s\in\Sigma_{\F}^{\mathrm{rad}}.$ Si $s\not\in\mathcal{C}$, alors, pour montrer que $K(\Leg\pref)$ est holomorphe sur $\check{s}$, il suffit de décomposer $\Leg\pref$ près de $\check{s}$ sous la forme~(\ref{equa:Leg-pref-s-check-n=0}) et d'appliquer~\cite[Proposition~2.6]{MP13}. Supposons donc $s\in\mathcal{C}.$ D'après le Lemme~\ref{lem:radiale-ordinaire}, le point $s$ est un point multiple ordinaire de $\mathcal{C}.$ De plus, le tissu $\Leg\pref$ se décompose près de $\check{s}$ sous la forme (\ref{equa:Leg-pref-s-check-n-geq-0}). Il résulte alors de la Proposition~\ref{pro:holomorphie-courbure-W-n-W-tau-W-d-tau-n} et de la Remarque~\ref{rem:point-ordinaire} que $K(\Leg\pref)$ est holomorphe~sur~$\check{s}.$

\noindent Maintenant, pour tout $s$ de $\Sigma_{\F}^{\mathcal{C}}\setminus\big(\Sing\Ccal\cup\Sigma_{\F}^{\mathrm{rad}}\cup\Sigma_{\F}^{\nu\geq2}\big)
=\Sigma_{\F}^{\mathcal{C}}\setminus\big(\Sing\Ccal\cup\Sigma_{\F}^{\tau\geq2}\big)$, on peut décomposer $\Leg\pref$ près de $\check{s}$ sous la forme (\ref{equa:Leg-pref-s-check-n-geq-0}) avec $n=\tau=1$. Par~suite,~$K(\Leg\pref)$ est holomorphe le long de $\check{s}$ par application de~\cite[Théorème~1]{MP13}.

\noindent Enfin, d'après la formule~(\ref{equa:Delta-Leg-Pref-convexe}), il en résulte que $K(\Leg\pref)$ est holomorphe sur $\Delta(\Leg\mathscr{F})\setminus\big(\check{\Xi}_{\F}^{\mathcal{C}}\cup\check{\Sigma}_{\F}^{\nu\geq2}\big)$ et~donc~sur~$\pd\setminus\big(\check{\Xi}_{\F}^{\mathcal{C}}\cup\check{\Sigma}_{\F}^{\nu\geq2}\big),$ d'où le théorème.
\end{proof}

\begin{cor}\label{cor:platitude-Leg-pref-convexe-reduit}
{\sl
Soit $\pref=\mathcal{C}\boxtimes\F$ un pré-feuilletage convexe réduit de degré $d\geq3$ sur $\pp.$ Alors la courbure de~$\Leg\pref$~est~holomorphe~sur~$\pd\setminus\check{\Xi}_{\F}^{\mathcal{C}}.$ En particulier, le $d$-tissu $\Leg\pref$ est plat si~et~seulement~si $K(\Leg\pref)$ est holomorphe~le~long~de~$\check{\Xi}_{\F}^{\mathcal{C}}.$
}
\end{cor}

\begin{proof}[\sl D\'emonstration]
\noindent La convexité réduite de $\F$ assure que $\Sigma_{\F}^{\nu\geq2}=\vide$ (\emph{cf.} preuve du Corollaire~\ref{cor:Delta-Leg-Pref-convexe}); il suffit alors d'appliquer le Théorème~\ref{thm:Holomorphie-K-Leg-pref-convexe}.
\end{proof}

\bigskip

\section{Preuve du Théorème~\ref{theoreme:C-invariante-convexe-reduit-plat}}\label{sec:preuve-theoreme-1}
\bigskip

\noindent Avant de démontrer le Théorème~\ref{theoreme:C-invariante-convexe-reduit-plat}, nous avons besoin de la proposition suivante.

\begin{pro}\label{pro:F-convexe-reduit-singularite-simple}
{\sl Soit $\F$ un feuilletage convexe réduit sur $\pp$.

\noindent\textbf{\textit{1.}} Par toute singularité non radiale $m$ de $\F$ passent exactement deux droites $\ell_{m}^{(1)}$ et $\ell_{m}^{(2)}$ invariantes par $\F$.

\noindent\textbf{\textit{2.}} Pour tout $m\in\Sing\F\setminus\Sigma_{\F}^{\mathrm{rad}}$ et pour tout $i\in\{1,2\}$, nous avons $\mathrm{CS}(\F,\ell_{m}^{(i)},m)\not\in\R^{+}$. En particulier, toute singularité non radiale de $\F$ est simple.

\noindent\textbf{\textit{3.}} Soit $\mathcal{C}\subset\pp$ une courbe irréductible invariante par $\F$ qui n'est pas une droite. Alors toutes les singularités de $\F$ sur $\mathcal{C}$ sont radiales, {\it i.e.} $\Sigma_{\F}^{\mathcal{C}}\subset\Sigma_{\F}^{\mathrm{rad}}.$
}
\end{pro}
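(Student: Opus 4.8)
The plan is to deduce all three assertions from the local normal form of a non-radial singularity together with the global convexity of $\F$. Throughout I use that, $\F$ being reduced convex, all its singularities are non-degenerate (\cite[Lemme~6.8]{BM18Bull}), so that $\Sigma_{\F}^{\nu\geq2}=\vide$ and hence $\Sigma_{\F}^{\tau\geq2}=\Sigma_{\F}^{\mathrm{rad}}$; a singularity $m\notin\Sigma_{\F}^{\mathrm{rad}}$ therefore satisfies $\nu(\F,m)=\tau(\F,m)=1$. Working in a local chart $(u,v)$ centred at $m$, I first record that such an $m$ has two distinct nonzero eigenvalues $\lambda_1,\lambda_2$: they are nonzero since $m$ is non-degenerate, and distinct because $\tau(\F,m)=1$ rules out a scalar linear part while the reducedness of $\IF$ rules out a Jordan one (a direct computation of the affine equation of $\IF$ shows that a non-diagonalisable linear part would force the eigen-line into $\IF$ with multiplicity $2$). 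Hence $m$ possesses exactly two smooth transverse separatrices, tangent to the two eigendirections.

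For assertion \textbf{\textit{1.}} I would compute the germ of $\IF$ at $m$. Writing the defining $1$-form as $\omega=a\,\mathrm{d}u+b\,\mathrm{d}v$ with the two eigendirections as coordinate axes, the affine inflection equation factors as $u\,v\cdot J(u,v)$ with $J(0,0)=\lambda_1\lambda_2(\lambda_1-\lambda_2)\neq0$. Thus the germ of $\mathrm{Supp}\,\IF$ at $m$ is exactly the union of the two eigendirections, each with multiplicity $1$. Since $\F$ is convex, $\IF$ is a product of invariant lines; its two branches through $m$ are therefore germs of invariant lines tangent to the eigendirections, hence coincide with the two separatrices, which are consequently (pieces of) invariant lines $\ell_m^{(1)},\ell_m^{(2)}$. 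Conversely, any invariant line through $m$ is tangent to an eigendirection and so equals one of these two. This yields exactly two invariant lines through $m$.

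For assertion \textbf{\textit{2.}}, along the separatrix tangent to the $\lambda_i$-eigendirection the Camacho--Sad index equals $\mathrm{CS}(\F,\ell_m^{(i)},m)=\lambda_{3-i}/\lambda_i$, so the two indices are mutually inverse; as $m$ is non-radial, $\lambda_1\neq\lambda_2$ and neither index equals $1$. The whole content is therefore to exclude $\lambda_2/\lambda_1\in\R^{+}\setminus\{1\}$, i.e. to rule out a node. I expect this to be the main obstacle: the computation above shows that the reducedness of $\IF$ imposes no condition at all on the ratio $\lambda_2/\lambda_1$, so the exclusion of nodes must come from a global argument. I would obtain it from the structure of reduced convex foliations---for instance by combining the Camacho--Sad relation $\sum_{s\in\ell}\mathrm{CS}(\F,\ell,s)=1$ along each of the $3d$ invariant lines with the contribution of the radial singularities situated on them, or by invoking the classification results of \cite{BM18Bull}---to show that such a foliation carries no node-type singularity. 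The assertion that every non-radial singularity is then simple follows at once, since $\Q^{+}\subset\R^{+}$.

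Finally, assertion \textbf{\textit{3.}} is a direct consequence of the first two. If some $s\in\Sigma_{\F}^{\mathcal{C}}=\Sing\F\cap\mathcal{C}$ were non-radial, then by assertion \textbf{\textit{2.}} it would be simple, so by assertion \textbf{\textit{1.}} its only germs of invariant curves would be the two separatrix-lines $\ell_s^{(1)},\ell_s^{(2)}$. But $\mathcal{C}$ is invariant and passes through $s$, so its local branch at $s$ is a separatrix, hence a germ of line; sharing a branch with that line, the irreducible curve $\mathcal{C}$ would coincide with it, contradicting the hypothesis that $\mathcal{C}$ is not a line. Therefore every singularity of $\F$ on $\mathcal{C}$ is radial, i.e. $\Sigma_{\F}^{\mathcal{C}}\subset\Sigma_{\F}^{\mathrm{rad}}$.
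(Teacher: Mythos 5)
Your treatment of assertion \textbf{\textit{2.}} contains a genuine gap: the whole difficulty of that assertion is precisely to exclude $\mathrm{CS}(\F,\ell_{m}^{(i)},m)\in\R^{+}\setminus\{1\}$, i.e.\ node-type singularities, and at that point your text stops proving and starts speculating (\og I expect this to be the main obstacle\fg, \og I would obtain it \ldots for instance by \ldots or by invoking \ldots\fg). Neither suggested route is carried out, and neither is likely to work as stated: the Camacho--Sad relation $\sum_{s\in\ell}\mathrm{CS}(\F,\ell,s)=1$ along an invariant line mixes complex contributions from all the singularities on $\ell$ and does not by itself forbid a single positive real index, and the classification results of \cite{BM18Bull} concern homogeneous foliations, whereas $\F$ here is an arbitrary reduced convex foliation. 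The paper closes exactly this gap by a non-elementary degeneration argument: by \cite[Proposition~3.2]{BM20Z} there is, for each $i$, a \emph{homogeneous} convex foliation $\mathcal{H}_{m}^{(i)}$ in the Zariski closure of the $\mathrm{Aut}(\pp)$-orbit of $\F$ with $\mathrm{CS}(\mathcal{H}_{m}^{(i)},\ell_{m}^{(i)},m)=\mathrm{CS}(\F,\ell_{m}^{(i)},m)$, and for homogeneous convex foliations one has the bound $\mathrm{Re}\big(\mathrm{CS}\big)<\tfrac12$ (\cite[Corollaire~3.3]{BM21Publ}); combined with $\mathrm{CS}(\F,\ell_{m}^{(1)},m)\,\mathrm{CS}(\F,\ell_{m}^{(2)},m)=1$ this gives the contradiction $1<\tfrac14$. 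Without some such global input your proof of \textbf{\textit{2.}} does not go through, and since \textbf{\textit{3.}} rests on \textbf{\textit{2.}}, it is affected as well.

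The rest is essentially sound. For \textbf{\textit{1.}} the paper simply cites \cite[Lemme~3.1]{BM20Z}, whereas you rederive it from the local expansion of the extactic equation at a non-degenerate, non-radial point (tangent cone $\lambda_1\lambda_2(\lambda_2-\lambda_1)uv$, with the Jordan case excluded because it would force a non-reduced tangent cone $v^{2}$ inside the reduced divisor $\IF$); that computation is correct and is a legitimate, more self-contained alternative. Your deduction of \textbf{\textit{3.}} from \textbf{\textit{1.}} and \textbf{\textit{2.}} matches the paper's, except that you should make explicit the fact (cited in the paper as \cite[Corollaire~3.8]{CCD13}) that a simple singularity admits no invariant curve germ other than its two separatrices.
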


\begin{proof}[\sl D\'emonstration]
Le premier point est tiré de \cite[Lemme~3.1]{BM20Z}. Montrons le deuxième point. Soit~$m\in\Sing\F\setminus\Sigma_{\F}^{\mathrm{rad}}$. D'après \cite[Proposition~3.2]{BM20Z}, pour $i=1,2$, il existe un feuilletage homogène convexe $\mathcal{H}_{m}^{(i)}$ sur $\pp$ appartenant à l'adhérence de \textsc{Zariski} de la $\mathrm{Aut}(\pp)$-orbite de $\F$ et vérifiant les propriétés suivantes
\begin{itemize}
\item [$\bullet$] la droite $\ell_{m}^{(i)}$ est invariante par $\mathcal{H}_{m}^{(i)}$;
\vspace{0.5mm}

\item [$\bullet$] le point $m$ est singulier non-dégénéré et non-radial pour $\mathcal{H}^{(i)}_{m}$;
\vspace{0.5mm}

\item [$\bullet$] $\mathrm{CS}(\mathcal{H}^{(i)}_{m},\ell_{m}^{(i)},m)=\mathrm{CS}(\F,\ell_{m}^{(i)},m)\neq0$.
\end{itemize}
\vspace{1mm}

\noindent Par ailleurs, pour $i=1,2$, \cite[Corollaire~3.3]{BM21Publ} assure que $\mathrm{Re}\big(\mathrm{CS}(\mathcal{H}^{(i)}_{m},\ell_{m}^{(i)},m)\big)<\frac{1}{2}.$ Maintenant, si l'on avait~$\mathrm{CS}(\F,\ell_{m}^{(1)},m)\in\R^{+}$, on aurait aussi $\mathrm{CS}(\F,\ell_{m}^{(2)},m)\in\R^{+}$, car $\mathrm{CS}(\F,\ell_{m}^{(1)},m)\mathrm{CS}(\F,\ell_{m}^{(2)},m)=1.$ On~en~déduirait~que~$1=\mathrm{CS}(\F,\ell_{m}^{(1)},m)\mathrm{CS}(\F,\ell_{m}^{(2)},m)<\frac{1}{4}$, ce qui est absurde. Par conséquent, pour $i=1,2,$  $\mathrm{CS}(\F,\ell_{m}^{(i)},m)\not\in\R^{+}$; en particulier, $\mathrm{CS}(\F,\ell_{m}^{(i)},m)\not\in\Q_{>0}$, ce qui montre que $m$ est une singularité simple~de~$\F.$ Le point \textbf{\textit{2.}} est ainsi prouvé.
\smallskip

\noindent Concernant le troisième point, toute singularité non radiale $s$ de $\F$ étant simple, le feuilletage $\F$ ne peut avoir aucune autre courbe invariante passant par $s$ hormis les deux droites $\ell_{s}^{(1)}$ et $\ell_{s}^{(2)}$ (\emph{cf.} \cite[Corollaire~3.8]{CCD13}). La~courbe~$\Ccal$ n'étant pas une droite, il en résulte que toute singularité de $\F$ sur $\Ccal$ est~forcément~radiale.
\end{proof}

\begin{proof}[\sl D\'emonstration du Théorème~\ref{theoreme:C-invariante-convexe-reduit-plat}]
Le cas où la courbe $\Ccal$ est formée uniquement de droites ($\F$-invariantes) fait l'objet de \cite[Théorème~1]{Bed24arxiv}. Supposons donc que $\Ccal$ ait au moins une composante irréductible qui~n'est~pas~une droite. Si $\Xi_{\F}^{\mathcal{C}}$ est vide alors $\Leg\pref$ est plat par application du Corollaire~\ref{cor:platitude-Leg-pref-convexe-reduit}. Notons que la condition $\Xi_{\F}^{\mathcal{C}}=\vide$ est toujours vérifiée si $\Ccal$ contient au plus une droite. En effet, désignons par $\Ccal_0$ l'union des composantes irréductibles de $\Ccal$ qui ne sont pas des droites. D'après le point~\textbf{\textit{3.}} de la Proposition~\ref{pro:F-convexe-reduit-singularite-simple}, nous avons $\Sigma_{\F}^{\mathcal{C}_0}\subset\Sigma_{\F}^{\mathrm{rad}}$; si $\Ccal=\Ccal_0$ ou $\Ccal=\Ccal_0\cup\ell,$ avec $\ell$ une droite, alors $\Sing\mathcal{C}\subset\Sigma_{\F}^{\mathcal{C}_0}\subset\Sigma_{\F}^{\mathrm{rad}},$~d'où~$\Xi_{\F}^{\mathcal{C}}=\vide.$
\smallskip

\noindent Considérons donc le cas où $\Ccal=(\cup_{i=1}^{n}\ell_i)\cup\Ccal_0$, où les $\ell_i$ sont des droites $\F$-invariantes, avec $n\geq2.$ Supposons dans un premier temps que $n=2$ et $\Xi_{\F}^{\mathcal{C}}\neq\vide.$ Posons $s:=\ell_1\cap\ell_2$; alors $\Sing\mathcal{C}\subset\{s\}\cup\Sigma_{\F}^{\mathcal{C}_0}\subset\{s\}\cup\Sigma_{\F}^{\mathrm{rad}}.$ Il vient que $\Xi_{\F}^{\mathcal{C}}=\{s\}$; de plus $s\not\in\Ccal_0$ (car sinon $s\in\Sigma_{\F}^{\mathrm{rad}}$), de sorte que $\nu(\Ccal,s)=2.$ En vertu du Lemme~\ref{lem:Leg-pref-s-check}, le $d$-tissu $\Leg\pref$ peut se décomposer près de $\check{s}$ sous la forme $\Leg\pref=\W_2\boxtimes\W_1\boxtimes\W_{d-3}$, où $\W_2=\Leg\ell_1\boxtimes\Leg\ell_2$, $\W_1$ est un feuilletage admettant $\check{s}$ comme courbe invariante et $\W_{d-3}$ est un $(d-3)$-tissu régulier et transverse à $\check{s}.$ En reprenant un argument de la démonstration de~\cite[Proposition~2.2]{Bed24arxiv}, nous obtenons que $K(\Leg\pref)$ a au plus des pôles simples le long de $\check{s}.$ Comme la $2$-forme $K(\Leg\pref)$ est holomorphe sur $\pd\setminus\{\check{s}\}$ (Corollaire~\ref{cor:platitude-Leg-pref-convexe-reduit}) et~que~le diviseur canonique de $\pp$ est de degré~$-3$, nous en déduisons que $K(\Leg\pref)$ est identiquement nulle.

\noindent Supposons maintenant que $n\geq3.$ En posant $\Ccal_i=\ell_i\cup\Ccal_0$ et $\Ccal_{i,j}=\ell_i\cup\ell_j\cup\Ccal_0$, \cite[Lemme~2.1]{Bed24arxiv} assure que
\begin{small}
\begin{align*}
K(\Leg\pref)=K(\boxtimes_{i=1}^{n}\Leg\ell_i)-(n-2)\sum_{i=1}^{n}K(\Leg(\Ccal_i\boxtimes\F))+
\sum_{1\leq i<j\leq n}K(\Leg(\Ccal_{i,j}\boxtimes\F))+\binom{n-1}{2}K(\Leg(\Ccal_0\boxtimes\F)).
\end{align*}
\end{small}
\hspace{-1mm}Or, d'une part, $K(\boxtimes_{i=1}^{n}\Leg\ell_i)\equiv0$, car $\boxtimes_{i=1}^{n}\Leg\ell_i$ est formé de pinceaux de droites; d'autre part, d'après ce qui précède, $K(\Leg(\Ccal_0\boxtimes\F))=K(\Leg(\Ccal_i\boxtimes\F))=K(\Leg(\Ccal_{i,j}\boxtimes\F))\equiv0.$ Il en résulte que $K(\Leg\pref)\equiv0.$
\end{proof}

\section{Exemples de pré-feuilletages convexes réduits}
\bigskip

\noindent Dans ce paragraphe, nous allons donner des exemples de pré-feuilletages convexes réduits dont le feuilletage associé est donné par la Table~1 de \cite{MP13}, à savoir: le feuilletage de \textsc{Fermat} $\F_{0}^{d}$ de degré $d$, le pinceau de \textsc{Hesse} $\Hesse$ de degré~$4$, le feuilletage modulaire de \textsc{Hilbert} $\Hilbertcinq$ de degré~$5$ et le feuilletage de Hesse $\Hessesept$ de degré~$7.$

\subsection{Pré-feuilletages convexes réduits dont le feuilletage associé est $\F_{0}^{d}$}\label{subsec:Fermat}

Le feuilletage de \textsc{Fermat} $\F_{0}^{d}$ est défini en coordonnées homogènes par la $1$-forme
\[
\Omegaoverline_{0}^{d}=x^{d}\alpha+y^{d}\beta+z^{d}\gamma,
\]
où $\alpha=y\mathrm{d}z-z\mathrm{d}y$,\, $\beta=z\mathrm{d}x-x\mathrm{d}z$\, et\, $\gamma=x\mathrm{d}y-y\mathrm{d}x.$ Les $3d$ droites invariantes de $\F_{0}^{d}$ sont:
\begin{small}
\begin{align*}
&
\ell_1=\{x=0\},\hspace{2.5mm}
\ell_2=\{y=0\},\hspace{2.5mm}
\ell_3=\{z=0\},\hspace{2.5mm}
\ell_{k+4}=\{y=\zeta^k x\},\hspace{2.5mm}
\ell_{k+d+3}=\{y=\zeta^k z\},\hspace{2.5mm}
\ell_{k+2d+2}=\{x=\zeta^k z\},
\end{align*}
\end{small}
\hspace{-0.75mm}où $k\in\{0,\ldots,d-2\}$ et $\zeta=\exp(\tfrac{2\mathrm{i}\pi}{d-1}).$ De plus, $\F_{0}^{d}$ admet l'intégrale première rationnelle $\dfrac{z^{d-1}\left(y^{d-1}-x^{d-1}\right)}{y^{d-1}\left(x^{d-1}-z^{d-1}\right)}$; ses feuilles distinctes des droites $\ell_i$ sont les courbes algébriques $(\mathcal{C}_{\lambda})_{\lambda\in\C\setminus\{0,1\}}$ d'équation $P_{\lambda}(x,y,z)=0$, où
\[
\hspace{-0.45cm}
P_{\lambda}(x,y,z)=(xy)^{d-1}-\lambda(xz)^{d-1}+(\lambda-1)(yz)^{d-1}.
\]
Ainsi, toute courbe algébrique $\mathcal{C}$ invariante par $\F_{0}^{d}$ est de la forme $\mathcal{C}=\left(\bigcup\limits_{i\in \Lambda}\ell_i\right)\bigcup\left(\bigcup\limits_{j=1}^{n}\mathcal{C}_{\lambda_j}\right)$, avec $\Lambda\subset\{1,2,\ldots,3d\}$, $n\in\N$ et $\lambda_{j}\neq\lambda_{j'}$ si $j\neq j'$; le tissu $\Leg(\mathcal{C}\boxtimes\F_{0}^{d})$ est plat par le Théorème~\ref{theoreme:C-invariante-convexe-reduit-plat}.
\medskip

\noindent En vertu de~\cite[Exemple~6.5]{BM18Bull} et de \cite[Proposition~4.4]{BM24Pisa}, l'adhérence de \textsc{Zariski} de la $\mathrm{Aut}(\pp)$-orbite de $\F_{0}^{d}$ contient les feuilletages $\mathcal{H}_{0}^{d}$, resp. $\mathcal{H}_{1}^{d}$, resp. $\F_{1}^{d}$ (nécessairement convexes) définis en carte affine $z=1$ par les $1$-formes
\begin{align*}
&\omega_{\hspace{0.2mm}0}^{\hspace{0.2mm}d}=(d-1)y^{d}\mathrm{d}x+x(x^{d-1}-dy^{d-1})\mathrm{d}y,&&
\text{resp.}\hspace{1.5mm}\omega_{\hspace{0.2mm}1}^{\hspace{0.2mm}d}=y^d\mathrm{d}x-x^d\mathrm{d}y,&&
\text{resp.}\hspace{1.5mm}\omegaoverline_{1}^{d}=y^{d}\mathrm{d}x+x^{d}(x\mathrm{d}y-y\mathrm{d}x).
\end{align*}
Autrement dit, nous avons l'inclusion suivante
\begin{align*}
&
\mathcal{O}(\mathcal{H}_{0}^{d})\cup\mathcal{O}(\mathcal{H}_{1}^{d})\cup\mathcal{O}(\F_{0}^{d})\cup\mathcal{O}(\F_{1}^{d})\subset\overline{\mathcal{O}(\F_{0}^{d})}.
\end{align*}
Les feuilletages homogènes $\mathcal{H}_{0}^{d}$ et $\mathcal{H}_{1}^{d}$ sont linéairement conjugués pour $d=2$, mais ce n'est plus le cas pour $d\geq3$, \emph{voir} \cite{BM18Bull}. En outre, en vertu de~\cite[Remarque~4.3~et~Proposition~4.4]{BM24Pisa}, nous avons les égalités suivantes
\begin{align*}
\overline{\mathcal{O}(\mathcal{H}_{0}^{d})}=\mathcal{O}(\mathcal{H}_{0}^{d})\cup\mathcal{O}(\F_{1}^{d})
&&\text{et}&&
\overline{\mathcal{O}(\mathcal{H}_{1}^{d})}=\mathcal{O}(\mathcal{H}_{1}^{d})\cup\mathcal{O}(\F_{1}^{d}).
\end{align*}

\begin{pro}\label{pro:H0-H1-F1}
{\sl Soit $\F\in\{\mathcal{H}_{0}^{d},\mathcal{H}_{1}^{d},\F_{1}^{d}\}$. Pour toute courbe algébrique $\mathcal{C}$ invariante par $\F,$ le tissu $\Leg(\mathcal{C}\boxtimes\F)$ est plat.}
\end{pro}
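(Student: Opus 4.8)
The plan is to treat $\mathcal{H}_{0}^{d}$, $\mathcal{H}_{1}^{d}$ and $\F_{1}^{d}$ uniformly by means of Theorem~\ref{thm:Holomorphie-K-Leg-pref-convexe}, which applies because each of them is convex (they belong to $\overline{\mathcal{O}(\F_{0}^{d})}$) even though none is convex reduced; this is exactly why the main Theorem~\ref{theoreme:C-invariante-convexe-reduit-plat} cannot be quoted directly. First I would read off the singularities from the defining $1$-forms. For the two homogeneous foliations $\mathcal{H}_{0}^{d}$ and $\mathcal{H}_{1}^{d}$ the origin $O=[0:0:1]$ is a singularity with $\nu(\F,O)=d$, and one checks that every other singularity lies on the invariant line at infinity and is either radial or non-degenerate; the same picture holds for $\F_{1}^{d}$ after an explicit computation. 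Thus $\Sigma_{\F}^{\nu\geq2}=\{O\}$ and $\check{\Sigma}_{\F}^{\nu\geq2}=\{\check{O}\}$ is a single line, so by Theorem~\ref{thm:Holomorphie-K-Leg-pref-convexe} the curvature $K(\Leg(\mathcal{C}\boxtimes\F))$ is holomorphic on $\pd\setminus(\check{\Xi}_{\F}^{\mathcal{C}}\cup\{\check{O}\})$.

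I would then use the explicit first integral of $\F$ to describe the invariant curves: every $\mathcal{C}$ is a union of invariant lines together with members of a pencil of higher-degree curves whose base points are $O$ and finitely many radial singularities on the line at infinity. Since $\mathcal{C}$ is $\F$-invariant we have $\Sing\mathcal{C}\subset\Sing\F$, and a crossing of two invariant components is a singularity of $\F$; because every pencil member meets the line at infinity only at radial points, the only non-radial singular points an invariant curve can acquire are $O$ itself and the transverse crossings of an invariant line with the line at infinity, each of which is an ordinary double point $s$ with $\tau(\F,s)=1$. Hence $\check{\Xi}_{\F}^{\mathcal{C}}$ is a union of such dual lines $\check{s}$ together with possibly $\check{O}$.

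To turn these local facts into global vanishing I would split off the lines of $\mathcal{C}$ and apply \cite[Lemme~2.1]{Bed24arxiv} exactly as in the proof of Theorem~\ref{theoreme:C-invariante-convexe-reduit-plat}, writing $K(\Leg(\mathcal{C}\boxtimes\F))$ as a combination of $K(\boxtimes_{i}\Leg\ell_{i})$ (which vanishes, being a sum of curvatures of pencils of dual lines) and of the curvatures of pre-foliations $\mathcal{C}'\boxtimes\F$ in which at most two lines are retained. The point of the reduction is that in every such term the relevant pole locus is very small: all crossings except those of a line with the line at infinity occur at radial points or at $O$, so that the poles of the corresponding curvature are confined to the divisor $\{\check{O}\}\cup\{\check{s}\}$, whose degree is at most two. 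Along $\check{s}$ (an ordinary double point with $\tau=1$) I would decompose the web by Lemma~\ref{lem:Leg-pref-s-check} into pencils of dual lines times a regular transverse web and bound the pole by a simple one as in Remarque~\ref{rem:point-ordinaire} and \cite[Proposition~2.2]{Bed24arxiv}. The decisive step is the analysis along $\check{O}$: in the dual chart where $\check{O}=\{q=0\}$, the implicit equation of $\Leg\F$ restricted to $q=0$ equals $x^{d}$ times a factor independent of $x$, so that all $d$ slopes $x=\mathrm{d}q/\mathrm{d}p$ collapse to $0$ and $\check{O}$ is totally invariant by $\Leg\F$ of high multiplicity, while the branches of $\mathcal{C}'$ through $O$ are tangent to $\check{O}$ by Lemma~\ref{lem:Leg-C-s-check}. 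I would study this local model of the web dual to the degenerate center directly and show that the curvature has at worst a simple pole along $\check{O}$.

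Granting the two simple-pole bounds, in each term $\mathcal{C}'\boxtimes\F$ the curvature is holomorphic off a union $D$ of at most two lines and has at worst simple poles there; since $\Omega^{2}_{\pd}(D)\cong\mathcal{O}_{\pd}(\deg D-3)$ carries no non-zero section when $\deg D\leq2$, the $2$-form vanishes identically, so each term is flat, and the decomposition then gives $K(\Leg(\mathcal{C}\boxtimes\F))\equiv0$. The main obstacle is precisely the local study along $\check{O}$: the degenerate singularity $\nu(\F,O)=d$ is the configuration left untreated by Theorem~\ref{thm:Holomorphie-K-Leg-pref-convexe} and by every preceding local lemma (all of which assume $\nu(\F,s)=1$), and bounding the order of the pole of the Blaschke curvature of the web dual to such a center is where the genuine difficulty lies.
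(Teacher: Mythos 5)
Your strategy is genuinely different from the paper's, but it contains a gap that you yourself identify without closing: the simple-pole bound for the curvature along the line $\check{O}$ dual to the degenerate singularity $O$ with $\nu(\F,O)=d$. Theorem~\ref{thm:Holomorphie-K-Leg-pref-convexe} only localizes the possible poles to $\check{\Xi}_{\F}^{\mathcal{C}}\cup\check{\Sigma}_{\F}^{\nu\geq2}$; none of the local lemmas of the paper (Lemmes~\ref{lem:Leg-C-s-check}, \ref{lem:Leg-pref-s-check}, \ref{lem:Leg-pref-C-check}, Proposition~\ref{pro:holomorphie-courbure-W-n-W-tau-W-d-tau-n}) applies at a singularity of algebraic multiplicity $\geq2$, and along $\check{O}$ the web $\Leg\F$ is totally invariant of high multiplicity while the duals of the non-linear invariant components (which pass through $O$ with multiplicity $d-1$) are also tangent to $\check{O}$. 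Establishing that $K$ has at worst a simple pole there is the entire difficulty, and asserting that you ``would study this local model directly'' is not a proof. The subsequent degree count $\deg D\leq2$ also depends on unverified claims (that every crossing of invariant components other than $O$ and one line--line intersection is radial, for each of the three foliations).

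The paper takes a completely different and much shorter route: it exploits the fact that $\mathcal{H}_{0}^{d}$, $\mathcal{H}_{1}^{d}$ et $\F_{1}^{d}$ lie in the Zariski closure of the $\mathrm{Aut}(\pp)$-orbit of the \textsc{Fermat} foliation $\F_{0}^{d}$, which \emph{is} convex reduced. For each invariant curve $\mathcal{C}$ of $\F$, it exhibits an explicit $\F_{0}^{d}$-invariant curve $\{F=0\}$ and a family of automorphisms $\varphi_{\varepsilon}$ such that $\varphi_{\varepsilon}^{*}\big(F\,\Omegaoverline_{0}^{d}\big)$, suitably renormalized, converges to the $1$-form defining $\mathcal{C}\boxtimes\F$ (with $\F_{1}^{d}$ obtained in a second step as a degeneration of $\mathcal{H}_{0}^{d}$). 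Since $\Leg(\{F=0\}\boxtimes\F_{0}^{d})$ is flat by Th\'eor\`eme~\ref{theoreme:C-invariante-convexe-reduit-plat} and flatness is a closed condition on such families, the limit web is flat. This degeneration argument entirely bypasses the local analysis at the degenerate singularity that your approach would have to carry out. If you want to salvage your route, you must actually produce the pole estimate along $\check{O}$; otherwise the argument is incomplete precisely at its decisive step.
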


\begin{proof}[\sl D\'emonstration]
Avec les notations ci-dessus, les feuilletages $\mathcal{H}_{0}^{d}$, $\mathcal{H}_{1}^{d}$ et $\F_{1}^{d}$ sont donnés respectivement en coordonnées homogènes par les $1$-formes
\begin{align*}
\Omega_{0}^{d}=x^d\alpha+y^d\beta+dz\,y^{d-1}\gamma,&&
\Omega_{1}^{d}=x^d\alpha+y^d\beta,&&
\Omegaoverline_{1}^{d}=y^d\beta+x^d\gamma.
\end{align*}
Ils possèdent respectivement les intégrales premières rationnelles suivantes
\begin{align*}
\frac{y}{z}\left(\left(\frac{y}{x}\right)^{d-1}-1\right),&&
z^{d-1}\left(\frac{1}{y^{d-1}}-\frac{1}{x^{d-1}}\right),&&
\left(\frac{x}{y}\right)^{d-1}+\frac{(d-1)z}{x}.
\end{align*}
De plus, en notant $\mathrm{Inv}(\F)$ l'ensemble des droites $\F$-invariantes, nous avons
\begin{align*}
\mathrm{Inv}(\mathcal{H}_{0}^{d})=\mathrm{Inv}(\mathcal{H}_{1}^{d})=\{\ell_1,\ell_2,\ldots,\ell_{d+2}\}
&&\text{et}&&
\mathrm{Inv}(\mathcal{F}_{1}^{d})=\{\ell_1,\ell_2\}.
\end{align*}

\noindent Les feuilles de $\mathcal{H}_{0}^{d}$, resp. $\mathcal{H}_{1}^{d}$, resp. $\mathcal{F}_{1}^{d}$, autres que les droites $\ell_i$, sont les courbes algébriques d'équation
\begin{align*}
&0=Q_{\lambda}(x,y,z):=y^d-x^{d-1}(y+\lambda\,z),\hspace{-2.7cm}&&\lambda\in\C^*,\\
\text{resp.}\hspace{1.5mm}
&0=R_{\lambda}(x,y,z):=x^{d-1}y^{d-1}+\lambda(y^{d-1}-x^{d-1})z^{d-1},\hspace{-2.7cm}&&\lambda\in\C^*,\\
\text{resp.}\hspace{1.5mm}
&0=S_{\lambda}(x,y,z):=x^d+(d-1)y^{d-1}(z+\lambda\,x),\hspace{-2.7cm}&&\lambda\in\C.
\end{align*}

\noindent Ainsi, toute courbe algébrique $\mathcal{C}$ invariante par $\mathcal{H}_{0}^{d}$, resp. $\mathcal{H}_{1}^{d}$, resp. $\mathcal{F}_{1}^{d}$, a une équation de la forme
\begin{align}
&0=f_0(x,y,z):=x^{\delta_1}y^{\delta_2}z^{\delta_3}\prod_{k=0}^{d-2}(y-\zeta^{k}\,x)^{\delta_{k+4}}\prod_{j=1}^{n}Q_{\lambda_j}(x,y,z),
\label{equa:C-invariante-H0}\\
\text{resp.}\hspace{1.5mm}
&0=f_1(x,y,z):=x^{\delta_1}y^{\delta_2}z^{\delta_3}\prod_{k=0}^{d-2}(y-\zeta^{k}\,x)^{\delta_{k+4}}\prod_{j=1}^{n}R_{\lambda_j}(x,y,z),
\label{equa:C-invariante-H1}\\
\text{resp.}\hspace{1.5mm}
&0=\overline{f}_{1}(x,y,z):=x^{\delta_1}y^{\delta_2}\prod_{j=1}^{n}S_{\mu_j}(x,y,z),
\label{equa:C-invariante-F1}
\end{align}
où $n\in\N$, $\lambda_j\in\C^*$ et $\mu_j\in\C$ avec $\lambda_j\neq\lambda_{j'}$ et $\mu_j\neq\mu_{j'}$ si $j\neq j'$,  et $\delta_i=0$ ou $1$ suivant que le lieu des zéros du facteur correspondant soit contenu ou non dans la courbe $\mathcal{C}.$
\medskip

\noindent Supposons d'abord que $\mathcal{C}$ soit donnée par (\ref{equa:C-invariante-H0}) et posons
\begin{align*}
F_0(x,y,z):=z^{\delta_1}y^{\delta_2}(x-y)^{\delta_3}\prod_{k=0}^{d-2}(y-\zeta^{k}\,z)^{\delta_{k+4}}\prod_{j=1}^{n}P_{\rho_j}(x,y,z),
\quad\text{où}\hspace{1mm} \rho_j=\frac{\lambda_j}{(d-1)\varepsilon}.
\end{align*}
Alors la courbe d'équation $F_0(x,y,z)=0$ est invariante par $\F_{0}^{d}$ et, en considérant la famille d'automorphismes $\varphi_0=[y+\varepsilon\,z:y:x]$, nous avons
\begin{align*}
\lim_{\varepsilon\to 0}
\frac{1}{\varepsilon^{\delta_3+1}y^{n(d-2)}}\varphi_{0}^{*}\left(F_0(x,y,z)\Omegaoverline_{0}^{d}\right)=-f_0(x,y,z)\Omega_{0}^{d}.
\end{align*}
Comme $\Leg\left(\{F_0(x,y,z)=0\}\boxtimes\F_{0}^{d}\right)$ est plat, nous en déduisons que $\Leg(\mathcal{C}\boxtimes\mathcal{H}_{0}^{d})$ l'est aussi.
\smallskip

\noindent Supposons maintenant que $\mathcal{C}$ soit décrite par l'équation (\ref{equa:C-invariante-H1}). Posons
\begin{align*}
F_1(x,y,z):=x^{\delta_1}y^{\delta_2}z^{\delta_3}\prod_{k=0}^{d-2}(y-\zeta^{k}\,x)^{\delta_{k+4}}\prod_{j=1}^{n}P_{\sigma_j}(x,y,z),
\quad\text{où}\hspace{1mm} \sigma_j=\frac{\lambda_j}{\varepsilon^{d-1}}.
\end{align*}
Alors l'équation $F_1(x,y,z)=0$ définit une courbe algébrique invariante par $\F_{0}^{d}$. En outre, nous avons
\begin{align*}
\lim_{\varepsilon\to 0}
\frac{1}{\varepsilon^{\delta_3+1}}\varphi_{1}^{*}\left(F_1(x,y,z)\Omegaoverline_{0}^{d}\right)=f_1(x,y,z)\Omega_{1}^{d},
\quad\text{où}\hspace{1mm}
\varphi_{1}=[x:y:\varepsilon\,z].
\end{align*}
Le tissu $\Leg\left(\{F_1(x,y,z)=0\}\boxtimes\F_{0}^{d}\right)$ étant plat, il en résulte que $\Leg(\mathcal{C}\boxtimes\mathcal{H}_{1}^{d})$ l'est également.
\smallskip

\noindent Supposons enfin que $\mathcal{C}$ soit donnée par (\ref{equa:C-invariante-F1}). Posons
\begin{align*}
\overline{F}_1(x,y,z):=x^{\delta_2}y^{\delta_1}\prod_{j=1}^{n}Q_{\tau_j}(x,y,z),
\quad\text{où}\hspace{1mm} \tau_j=-\left((d-1)\mu_j\varepsilon^{d-1}+1\right)\varepsilon.
\end{align*}
Alors la courbe d'équation $\overline{F}_1(x,y,z)=0$ est invariante par $\mathcal{H}_{0}^{d}$ et il est aisé de vérifier que
\begin{align*}
\lim_{\varepsilon\to 0}
\frac{1}{\varepsilon^{\delta_1+(n+1)d}}\overline{\varphi}_{1}^{*}\left(\overline{F}_1(x,y,z)\Omega_{0}^{d}\right)=(1-d)\overline{f}_1(x,y,z)\Omegaoverline_{1}^{d},
\quad\text{où}\hspace{1mm}
\overline{\varphi}_{1}=\big[y:\varepsilon\,x:x+(d-1)\varepsilon^{d-1}z\big]\hspace{0.5mm};
\end{align*}
la platitude de $\Leg\left(\{\overline{F}_1(x,y,z)=0\}\boxtimes\mathcal{H}_{0}^{d}\right)$, déjà établie, entraîne celle de $\Leg(\mathcal{C}\boxtimes\mathcal{F}_{1}^{d})$.
\end{proof}

\subsection{Pré-feuilletages convexes réduits dont le feuilletage associé est $\F_{H}^{4}$ ou $\F_{H}^{7}$}\label{subsec:Hesse-4-7}

Le feuilletage de \textsc{Hesse} $\Hesse$ de degré $4$ est défini en coordonnées homogènes par la $1$-forme
\[
\Omegahesse=yz(2\,x^3-y^3-z^3)\mathrm{d}x+xz(2y^3-x^3-z^3)\mathrm{d}y+xy(2z^3-x^3-y^3)\mathrm{d}z.
\]
\noindent Il possède l'intégrale première rationnelle $\frac{x^3+y^3+z^3}{3xyz}$; autrement dit, il s'agit du pinceau de cubiques de \textsc{Hesse}
\begin{align*}
\mathcal{C}_{\lambda,\mu}\hspace{1mm}\colon\mu(x^3+y^3+z^3)-3\lambda\,xyz=0,\qquad[\lambda:\mu]\in\mathbb{P}^{1}_{\C}.
\end{align*}
Les~$12=3\cdot4$ droites invariantes de $\Hesse$ s'obtiennent en prenant $[\lambda:\mu]\in E:=\{[1:0],[1:1],[\mathrm{j}:1],[\mathrm{j}^2:1]\},$ où $\mathrm{j}=\mathrm{e}^{2\mathrm{i}\pi/3}$:
\begin{small}
\begin{align*}
&\ell_1=\{x=0\},&&\ell_4=\{x+y+z=0\},                        &&\ell_7=\{\mathrm{j}\,x+y+z=0\},&&\ell_{10}=\{\mathrm{j}^2x+y+z=0\},\\
&\ell_2=\{y=0\},&&\ell_5=\{\mathrm{j}^2x+\mathrm{j}\,y+z=0\},&&\ell_8=\{x+\mathrm{j}\,y+z=0\},&&\ell_{11}=\{x+\mathrm{j}^2y+z=0\},\\
&\ell_3=\{z=0\},&&\ell_6=\{\mathrm{j}\,x+\mathrm{j}^2y+z=0\},&&\ell_9=\{x+y+\mathrm{j}\,z=0\},&&\ell_{12}=\{x+y+\mathrm{j}^2z=0\}.
\end{align*}
\end{small}
\hspace{-1.5mm}Si $[\lambda:\mu]\not\in E$, alors $\alpha:=\frac{\lambda}{\mu}\in\C\setminus\{1,\mathrm{j},\mathrm{j}^2\}$ et la cubique $\mathcal{C}_{\lambda,\mu}=\mathcal{C}_{\alpha,1}$ est irréductible. Ainsi, toute courbe algébrique $\mathcal{C}$ invariante par $\Hesse$ est de la forme $\mathcal{C}=\left(\bigcup\limits_{i\in \Lambda}\ell_i\right)\bigcup\left(\bigcup\limits_{k=1}^{n}\mathcal{C}_{\alpha_k,1}\right)$, où $\Lambda\subset\{1,2,\ldots,12\}$, $n\in\N$, $\alpha_k\in\C\setminus\{1,\mathrm{j},\mathrm{j}^2\}$ avec $\alpha_{k}\neq\alpha_{k'}$ si $k\neq k'$; le tissu $\Leg(\mathcal{C}\boxtimes\Hesse)$ est plat par le Théorème~\ref{theoreme:C-invariante-convexe-reduit-plat}.
\smallskip

\noindent Quant au feuilletage de \textsc{Hesse} $\Hessesept$ de degré $7$, il est défini par la $1$-forme
\[
\Omegahessesept=yz(y^3-z^3)(7x^3+y^3+z^3)\mathrm{d}x+xz(z^3-x^3)(x^3+7y^3+z^3)\mathrm{d}y+xy(x^3-y^3)(x^3+y^3+7z^3)\mathrm{d}z.
\]
\noindent Ses $21=3\cdot7$ droites invariantes comprennent les $12$ droites $\Hesse$-invariantes $\ell_1,\ldots,\ell_{12}$ et les $9$ droites suivantes
\begin{small}
\begin{align*}
&\hspace{-1.96cm}\ell_{13}=\{y=x\},            &&\ell_{16}=\{y=z\},            &&\ell_{19}=\{x=z\},\\
&\hspace{-1.96cm}\ell_{14}=\{y=\mathrm{j}\,x\},&&\ell_{17}=\{y=\mathrm{j}\,z\},&&\ell_{20}=\{x=\mathrm{j}\,z\},\\
&\hspace{-1.96cm}\ell_{15}=\{y=\mathrm{j}^2x\},&&\ell_{18}=\{y=\mathrm{j}^2z\},&&\ell_{21}=\{x=\mathrm{j}^2z\}.
\end{align*}
\end{small}
\hspace{-1mm}Notons que~$\Hessesept$ admet l'intégrale première rationnelle $\dfrac{P^3}{(x^3-y^3)(y^3-z^3)(z^3-x^3)Q^3},$ où
\begin{Small}
\begin{align*}
P&=248\,x^6y^6z^6(x^6+y^6+z^6)-154\,x^6y^6z^6(x^3y^3+x^3z^3+y^3z^3)-52\,x^3y^3z^3(x^{12}y^3+x^{12}z^3+x^3y^{12}+x^3z^{12}+y^{12}z^3+y^3z^{12})\\
&\hspace{3.43mm}+14\,x^3y^3z^3(x^{15}+y^{15}+z^{15})-10\,x^3y^3z^3(x^9y^6+x^9z^6+x^6y^9+x^6z^9+y^9z^6+y^6z^9)+6(x^{12}y^{12}+x^{12}z^{12}+y^{12}z^{12})\\
&\hspace{3.43mm}+4(x^{15}y^9+x^{15}z^9+x^9y^{15}+x^9z^{15}+y^{15}z^9+y^9z^{15})+x^{18}y^6+x^{18}z^6+x^6y^{18}+x^6z^{18}+y^{18}z^6+y^6z^{18}
\\
&\hspace{-0.88cm}{\fontsize{11}{11pt}\text{et}}
\\
Q&:=\mathrm{I}_{\Hessesept}=xyz(x^3-y^3)(y^3-z^3)(z^3-x^3)(x^3+y^3+z^3-3xyz)(x^3+y^3+z^3-3\mathrm{j}\,xyz)(x^3+y^3+z^3-3\mathrm{j}^2xyz),
\end{align*}
\end{Small}
\hspace{-1mm}{\it i.e.} $\Hessesept$ n'est rien d'autre que le pinceau $\mathcal{P}$ de courbes de degré $72$ défini par
\begin{align*}
\Gamma_{\lambda,\mu}\hspace{1mm}\colon\mu\,P^3-\lambda(x^3-y^3)(y^3-z^3)(z^3-x^3)Q^3=0,\qquad[\lambda:\mu]\in\mathbb{P}^{1}_{\C}.
\end{align*}
Si $\mathcal{C}$ est une courbe algébrique invariante par $\Hessesept$, alors $\mathcal{C}\subset\mathcal{P}$, et le tissu $\Leg(\mathcal{C}\boxtimes\Hessesept)$ est plat (Théorème~\ref{theoreme:C-invariante-convexe-reduit-plat}).

\subsection{Pré-feuilletages convexes réduits dont le feuilletage associé est $\F_{H}^{5}$}\label{subsec:Hilbert5}

Le feuilletage modulaire de \textsc{Hilbert} $\Hilbertcinq$ de degré~$5$ est défini en coordonnées homogènes par la $1$-forme
\begin{small}
\begin{align*}
\Omegahilbertcinq=\big(x^2-z^2\big)\big(x^2-(\sqrt{5}-2)^2z^2\big)\big(x+\sqrt{5}y\big)\big(y\mathrm{d}z-z\mathrm{d}y\big)+\big(y^2-z^2\big)\big(y^2-(\sqrt{5}-2)^2z^2\big)\big(y+\sqrt{5}x\big)\big(z\mathrm{d}x-x\mathrm{d}z\big).
\end{align*}
\end{small}
\hspace{-0.8mm}Notons que les singularités non radiales de $\Hilbertcinq$ sont à indice de \textsc{Camacho}-\textsc{Sad} $-\frac{3}{2}\pm\frac{\sqrt{5}}{2}\not\in\Q$; ceci implique que $\Hilbertcinq$ n'a pas d'intégrale première rationnelle, contrairement aux exemples précédents. En fait, ce feuilletage n'a pas d'intégrale première de type \textsc{Liouville}, car il n'admet pas de structure transversalement affine (\emph{cf.}~\cite[Théorème~1]{MendesP05}). De plus, d'après~\cite[Théorème~2]{MendesP05}, les seules courbes algébriques invariantes par $\Hilbertcinq$ sont ses quinze droites invariantes; elles sont données par
\begin{SMALL}
\begin{align*}
z\Big(x^2-y^2\Big)\Big(x^2-z^2\Big)\Big(y^2-z^2\Big)\Big(x^2-(\sqrt{5}-2)^2z^2\Big)\Big(y^2-(\sqrt{5}-2)^2z^2\Big)\Big((x+\phi\,y)^2-(\phi-1)^2z^2\Big)\Big(\big(x+(\phi-1)y\big)^2-(\phi-2)^2z^2\Big)=0,
\end{align*}
\end{SMALL}
\hspace{-1.15mm}où $\phi=\frac{1+\sqrt{5}}{2}.$ Si $\mathcal{C}$ est une courbe formée d'un certain nombre $n\leq 15$ de ces droites, alors le $(n+5)$-tissu $\Leg(\mathcal{C}\boxtimes\Hilbertcinq)$ est plat (Théorème~\ref{theoreme:C-invariante-convexe-reduit-plat}).

\section{Conjectures}
\bigskip

\noindent Nous concluons cet article en proposant deux conjectures. La première porte sur la classification des feuilletages~convexes~de~$\pp$, et la seconde affirme que le tissu dual de tout pré-feuilletage convexe $\pref$ sur~$\pp$~est~plat,~ce~qui étendrait le Théorème~\ref{theoreme:C-invariante-convexe-reduit-plat} au cas où $\pref$ n'est pas nécessairement convexe réduit.
\begin{conj}\label{conj:1}
{\sl Soit $\F$ un feuilletage convexe de degré $d\geq2$ sur $\pp$. Alors:
\begin{itemize}
\item ou bien $\F$ est convexe réduit;
\item ou bien $\F$ est homogène;
\item ou bien $\F$ est linéairement conjugué au feuilletage $\F_{1}^{d}$ défini par la $1$-forme $\omegaoverline_{1}^{d}=y^{d}\mathrm{d}x+x^{d}(x\mathrm{d}y-y\mathrm{d}x).$
\end{itemize}
}
\end{conj}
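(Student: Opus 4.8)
The plan is to convert the classification into a combinatorial problem about the inflection divisor. Since $\F$ is convex, $\IF=\IinvF$ is a product of invariant lines of total degree $3d$, the transverse part $\ItrF$ being zero. Writing the distinct invariant lines of $\F$ as $\ell_1,\dots,\ell_k$ with respective multiplicities $m_1\le\cdots\le m_k$ in $\IF$, we have $\sum_{i=1}^{k}m_i=3d$, while \cite{Per01} bounds $k\le3d$ with equality precisely when every $m_i$ equals $1$, that is, precisely in the reduced convex case. It therefore suffices to assume that $\IF$ is non-reduced, say $m_k\ge2$, and to prove that $\F$ is then homogeneous or linearly conjugate to $\F_{1}^{d}$.

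First I would set up a local dictionary between the multiplicity $m_i$ of an invariant line and the singularity data of $\F$ along it. Fixing coordinates with $\ell_i=\{z=0\}$ and expanding $\F$ near each $s\in\Sing\F\cap\ell_i$, the number $m_i$ is governed by the contact of $\F$ with $\ell_i$, measured through the tangency orders $\tau(\F,s)$ and the algebraic multiplicities $\nu(\F,s)$; a concentration of inflection mass, \emph{i.e.} $m_i\ge2$, forces a genuinely degenerate singularity on $\ell_i$ (of large radiality order or with $\nu(\F,s)\ge2$), in contrast with the reduced case where every singularity is non-degenerate. Coupling this with the \textsc{Camacho}--\textsc{Sad} relation $\sum_{s\in\ell_i}\mathrm{CS}(\F,\ell_i,s)=1$ along each invariant line and with the global count of the singularities of $\F$, I would constrain simultaneously the arrangement $\bigcup_i\ell_i$ and the distribution of its degenerate singularities. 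The singularity analysis of Proposition~\ref{pro:F-convexe-reduit-singularite-simple}, carried out there for reduced convex foliations via \cite{BM20Z} and \cite{BM21Publ}, would serve as the template, suitably relaxed so as to accommodate non-simple singularities.

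Next I would classify by degeneration inside the \textsc{Zariski} closure of the $\mathrm{Aut}(\pp)$-orbit of $\F$, following \cite{BM24Pisa}. The point is that a non-reduced configuration carries the inflection mass on too few lines to be generic, so a suitable one-parameter subgroup of $\mathrm{Aut}(\pp)$ should further concentrate $\IF$ and produce a homogeneous limit; this matches the hierarchy exhibited in the examples, where the reduced convex \textsc{Fermat} foliation degenerates to the homogeneous models $\mathcal{H}_{0}^{d},\mathcal{H}_{1}^{d}$, which in turn degenerate to $\F_{1}^{d}$. One then reads the trichotomy off the geometry of the limiting arrangement: a single pencil of concurrent invariant lines together with one transverse invariant line is the signature of a homogeneous foliation, while the total collapse of $\IF$ onto the two lines $\{x=0\}$ and $\{y=0\}$ characterizes $\F_{1}^{d}$.

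The hard part will be the rigidity needed to exclude every intermediate configuration. A priori the multiplicities $(m_1,\dots,m_k)$ could be spread over three or more invariant lines in general position, yielding neither a pencil nor the two-line pattern of $\F_{1}^{d}$; ruling this out seems to require a global statement to the effect that the mere failure of reducedness already forces the inflection divisor to concentrate onto a pencil or onto two lines. Establishing such rigidity uniformly in $d$ --- rather than only when $\F$ already possesses many invariant lines, where the orbit-closure techniques of \cite{BM24Pisa} apply directly --- is exactly the obstruction that keeps the statement conjectural.
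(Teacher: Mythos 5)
Le point essentiel est que l'\'enonc\'e en question est la Conjecture~\ref{conj:1} de l'article~: le texte n'en donne \emph{aucune} d\'emonstration. La seule \'evidence fournie est la validit\'e de l'\'enonc\'e en degr\'es $d=2$ et $d=3$ via les classifications cit\'ees (\cite{FP15}, \cite{BM20Bull}, \cite{BM21Four}), ainsi que l'observation que la Conjecture~\ref{conj:1} entra\^inerait la Conjecture~\ref{conj:2} en co-degr\'e $k\in\{0,1\}$. Votre texte n'est donc pas comparable \`a une preuve de l'article, puisqu'il n'y en a pas~; et il ne constitue pas non plus une preuve en soi, ce que vous reconnaissez d'ailleurs explicitement dans votre dernier paragraphe.

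Sur le fond, votre mise en place initiale est correcte et utile~: la convexit\'e donne $\IF=\IinvF$ de degr\'e $3d$, et la convexit\'e r\'eduite \'equivaut bien \`a ce que toutes les multiplicit\'es $m_i$ valent $1$~; la conjecture se ram\`ene donc \`a montrer que la non-r\'eduction de $\IF$ force $\F$ \`a \^etre homog\`ene ou conjugu\'e \`a $\F_{1}^{d}$. Mais les deux \'etapes suivantes restent des d\'eclarations d'intention~: (i) le \og dictionnaire local\fg{} entre $m_i$ et les invariants $\nu(\F,s)$, $\tau(\F,s)$ n'est pas \'etabli, et la relation de \textsc{Camacho}--\textsc{Sad} le long de chaque droite invariante ne suffit pas, \`a elle seule, \`a contraindre la distribution des singularit\'es d\'eg\'en\'er\'ees~; (ii) surtout, l'affirmation qu'une configuration non r\'eduite se concentre n\'ecessairement sur un pinceau de droites concourantes (plus une transverse) ou sur deux droites est pr\'ecis\'ement l'\'enonc\'e de rigidit\'e qu'il faudrait d\'emontrer --- les techniques d'adh\'erence d'orbite de \cite{BM24Pisa} produisent des d\'eg\'en\'erescences \emph{vers} des mod\`eles homog\`enes, mais ne permettent pas de remonter du mod\`ele limite au feuilletage de d\'epart. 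C'est cette lacune, que vous identifiez vous-m\^eme, qui fait que l'\'enonc\'e demeure conjectural~; votre proposition doit donc \^etre lue comme un programme de travail plausible, non comme une d\'emonstration.
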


\begin{conj}\label{conj:2}
{\sl Soit $\pref$ un pré-feuilletage convexe de degré $d\geq3$ sur $\pp.$ Alors le $d$-tissu $\Leg\pref$ est plat.}
\end{conj}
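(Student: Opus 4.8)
\noindent Le point de départ est le Théorème~\ref{thm:Holomorphie-K-Leg-pref-convexe}, valable pour tout pré-feuilletage convexe $\pref=\mathcal{C}\boxtimes\F$ de degré $d\geq3$: il ramène la platitude de $\Leg\pref$ à l'holomorphie de la courbure $K(\Leg\pref)$ le long du seul lieu $\check{\Xi}_{\F}^{\mathcal{C}}\cup\check{\Sigma}_{\F}^{\nu\geq2}$. Dans le cas convexe réduit on a $\Sigma_{\F}^{\nu\geq2}=\vide$, et le lieu $\check{\Xi}_{\F}^{\mathcal{C}}$ est précisément celui que traite la preuve du Théorème~\ref{theoreme:C-invariante-convexe-reduit-plat}. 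La difficulté propre à la Conjecture~\ref{conj:2} provient donc de l'apparition possible de singularités dégénérées de $\F$ (de multiplicité algébrique $\nu\geq2$) et, corrélativement, de points singuliers non radiaux de $\mathcal{C}$.

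\noindent Mon plan principal passerait par la Conjecture~\ref{conj:1}. Si celle-ci est établie, tout feuilletage convexe est convexe réduit, homogène, ou linéairement conjugué à $\F_{1}^{d}$. Le premier cas relève du Théorème~\ref{theoreme:C-invariante-convexe-reduit-plat}; le cas de $\F_{1}^{d}$ est réglé par la Proposition~\ref{pro:H0-H1-F1}. Pour le cas homogène, je reprendrais la technique de dégénérescence de la Proposition~\ref{pro:H0-H1-F1}: réaliser le pré-feuilletage $\mathcal{C}\boxtimes\F$ comme limite, sous une famille convenable d'automorphismes $\varphi_\varepsilon$ de $\pp$, d'un pré-feuilletage convexe réduit $\mathcal{C}'\boxtimes\F'$ dont la courbe $\mathcal{C}'$ dégénère de manière compatible sur $\mathcal{C}$. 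La platitude étant l'annulation de la $2$-forme globale $K$, c'est une condition fermée qui passe à la limite; la platitude de $\Leg(\mathcal{C}'\boxtimes\F')$ entraînerait alors celle de $\Leg\pref$. Ceci exploiterait, comme dans le cas de \textsc{Fermat}, que les feuilletages convexes non réduits apparaissent dans l'adhérence de \textsc{Zariski} d'orbites de feuilletages convexes réduits.

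\noindent À défaut de la classification, une approche directe consisterait à contrôler $K(\Leg\pref)$ séparément le long de chaque composante de $\check{\Sigma}_{\F}^{\nu\geq2}$ et de $\check{\Xi}_{\F}^{\mathcal{C}}$. Il faudrait pour cela généraliser le Lemme~\ref{lem:Leg-pref-s-check} au cas $\nu(\F,s)\geq2$, afin d'obtenir près de $\check{s}$ une décomposition $\Leg\pref=\W_{\mathrm{inv}}\boxtimes\W_{\mathrm{tr}}$ avec $\check{s}$ totalement invariante par $\W_{\mathrm{inv}}$ et $\W_{\mathrm{tr}}$ régulier transverse, puis étendre la Proposition~\ref{pro:holomorphie-courbure-W-n-W-tau-W-d-tau-n} pour ramener l'holomorphie de $K(\Leg\pref)$ à celle de la courbure du seul facteur invariant $\W_{\mathrm{inv}}$.

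\noindent Le principal obstacle réside dans les singularités dégénérées. Dans le plan principal, il se concentre dans la preuve même de la Conjecture~\ref{conj:1} — problème de classification encore ouvert — et dans la vérification que tout feuilletage homogène convexe, muni de chacune de ses courbes invariantes, s'obtient bien par dégénérescence d'un pré-feuilletage convexe réduit. Dans l'approche directe, l'obstacle est que, le long de la droite duale d'une singularité de multiplicité $\nu\geq2$, le modèle local de $\Leg\F$ ne fournit plus la composante totalement invariante de multiplicité minimale du discriminant sur laquelle repose la Proposition~\ref{pro:holomorphie-courbure-W-n-W-tau-W-d-tau-n}; il faudrait alors une annulation plus fine de la partie polaire de $K(\Leg\pref)$, reposant vraisemblablement sur l'invariance du diviseur d'inflexion $\IF$ par $\F$ qu'impose la convexité.
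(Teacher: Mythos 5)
This statement is Conjecture~\ref{conj:2}, which the paper explicitly leaves \emph{open}: there is no proof of it in the paper to compare against, only the closing remark that Conjecture~\ref{conj:1} would imply it for pre-foliations of co-degree $k\in\{0,1\}$. Your text is accordingly a strategy rather than a proof, and you are upfront about that. As far as it goes it is consistent with the paper: the reduction via Theorem~\ref{thm:Holomorphie-K-Leg-pref-convexe} correctly isolates the two new difficulties ($\check{\Xi}_{\F}^{\mathcal{C}}$ and $\check{\Sigma}_{\F}^{\nu\geq2}$), and your main route through Conjecture~\ref{conj:1}, splitting into the convex reduced case (Theorem~\ref{theoreme:C-invariante-convexe-reduit-plat}), the case of $\F_{1}^{d}$ (Proposition~\ref{pro:H0-H1-F1}) and the homogeneous case, is exactly the one the author sketches in the last section.

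The gaps are nonetheless real and go beyond the openness of Conjecture~\ref{conj:1}. First, even granting that conjecture, the paper only records the implication for co-degree $0$ and $1$; for $k\geq2$ the homogeneous case is not reduced to the quoted results, and your proposed fix --- realizing $\mathcal{C}\boxtimes\Hcal$ as a limit of convex reduced pre-foliations --- is an existence claim with no general mechanism behind it. Proposition~\ref{pro:H0-H1-F1} shows how delicate this is: for each target one must construct by hand both the degenerating family of automorphisms and a compatible family of invariant curves upstairs (the curves $F_0=0$, $F_1=0$, $\overline{F}_1=0$ with carefully tuned parameters $\rho_j,\sigma_j,\tau_j$), and nothing guarantees that every invariant curve of every homogeneous convex foliation arises as such a limit. (The passage to the limit itself is legitimate: flatness is a closed condition, and this is indeed how Proposition~\ref{pro:H0-H1-F1} concludes.) Second, in your direct approach the needed extension of Lemma~\ref{lem:Leg-pref-s-check} and Proposition~\ref{pro:holomorphie-courbure-W-n-W-tau-W-d-tau-n} to $\nu(\F,s)\geq2$ is precisely where the argument breaks: along $\check{s}$ the invariant factor of $\Leg\F$ need no longer carry a discriminant component of minimal multiplicity, so the hypothesis of Proposition~\ref{pro:holomorphie-courbure-W-n-W-tau-W-d-tau-n} fails and you offer no substitute for it. In short, the proposal identifies the right obstructions but overcomes neither; the statement remains a conjecture.
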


\noindent La classification des feuilletages convexes de degré $d=2$ (\emph{cf.} \cite[Proposition~7.4]{FP15} ou \cite[Théorème~A]{BM20Bull}) et~$d=3$ (\cite[Corollaire~C]{BM21Four}) montre la validité de la Conjecture~\ref{conj:1} pour $d\in\{2,3\}.$ Comme $\Leg(\mathcal{C}\boxtimes\F_{1}^{d})$ est plat pour toute courbe algébrique $\mathcal{C}$ invariante par $\F_{1}^{d}$ (Proposition~\ref{pro:H0-H1-F1}) et que $\Leg\preh$ l'est également pour tout pré-feuilletage homogène convexe $\preh$ sur $\pp$ (\cite[Théorème~2]{Bed24arxiv}), la Conjecture~\ref{conj:1} implique la Conjecture~\ref{conj:2} dans le cas des pré-feuilletages convexes de co-degré $k\in\{0,1\},$ {\it i.e.} dans le cas des feuilletages convexes et~des~pré-feuilletages convexes de co-degré $1.$

\bigskip
\noindent\textit{\textbf{Remerciements.  --- }}
Je tiens à remercier Jorge Vitório Pereira pour m'avoir indiqué l'intégrale première rationnelle du feuilletage $\Hessesept$ présentée au \S\ref{subsec:Hesse-4-7}, ainsi que certaines propriétés du feuilletage $\Hilbertcinq$ mentionnées au~\S\ref{subsec:Hilbert5}.


\end{document}